\newtheorem{lemma}[equation]{Lemma}
\newtheorem{prop}[equation]{Proposition}
\newtheorem{thm}[equation]{Theorem}
\newtheorem{cor}[equation]{Corollary}
\newtheorem{defn}[equation]{Definition}
\theoremstyle{definition}
\newtheorem{exmp}[equation]{Example}
\newtheorem{rmk}[equation]{Remark}
\numberwithin{equation}{section}
\newcommand{\Z}{\mathbf{Z}}
\newcommand{\Q}{\mathbf{Q}}
\newcommand{\C}{\mathbf{C}}
\newcommand{\F}{\mathbf{F}}
\newcommand{\R}{\mathbf{R}}
\newcommand{\Gal}[2]{\operatorname{Gal}(#1,#2)}
\newcommand{\Hom}[3][{}]{\operatorname{Hom}_{#1}(#2,#3)}
\newcommand{\Irr}[2]{\operatorname{Irr}_{#1}(#2)}
\newcommand{\ord}[2]{\operatorname{ord}_{#1}(#2)}
\newcommand{\GL}[3][{+}]{\operatorname{GL}^{#1}_{#2}(\F_{#3})}
\newcommand{\GU}[2]{\operatorname{GL}^-_{#1}(\F_{#2})}
\newcommand{\gl}{\GL nq}
\newcommand{\gu}{\GU nq}
\newcommand{\Li}{\operatorname{L}}
\newcommand{\SU}[2]{\operatorname{SL}^-_{#1}(\F_{#2})}
\newcommand{\cat}[3]{\mathcal{#1}_{#2}^{#3}}  
\newcommand{\m}{morphism}
\newcommand{\pol}{polynomial}
\newcommand{\gen}[1]{{\langle}#1{\rangle}}
\newcommand{\Mb}{M\"obius}
\newcommand{\Euc}{Euler characteristic}
\newcommand{\rchi}{\widetilde{\chi}}
\newcommand{\B}{\operatorname{B}}
\newcommand{\im}{\operatorname{im}}
\newcommand{\mynote}[1]{\noindent{\textcolor{red}{\textbf{[#1]}}}}
\newcommand{\diag}{\operatorname{diag}}
\newcommand{\FGL}[2]{\operatorname{FGL}^{#1}_{#2}}
\newcommand{\GGL}[2]{\operatorname{GGL}^{#1}_{#2}}
\newcommand{\IM}[2]{\operatorname{IM}_{#1}(#2)}
\newcommand{\SDIM}[3]{\operatorname{SDIM}^{#1}_{#2}(#3)}
\title{Equivariant Euler characteristics of unitary buildings}
\author{Jesper M.~M\o ller}
\address{Institut for Matematiske Fag\\
  Universitetsparken 5\\
  DK--2100 K\o benhavn}
\email{moller@math.ku.dk}
\urladdr{htpp://www.math.ku.dk/~moller}
\thanks{Supported by the Danish National Research Foundation through
  the Centre for Symmetry and Deformation (DNRF92)}
\subjclass[2010]{05E18, 06A07} \keywords{Equivariant \Euc, totally
  isotropic subspace, general unitary group over a finite field,
  generating function, irreducible polynomial}
\begin{document}
\date{\today}
\begin{abstract}
  The ($p$-primary) equivariant \Euc s of the buildings for the general unitary groups
  over finite fields are determined.
\end{abstract}
\maketitle
\tableofcontents

\section{Introduction}
\label{sec:intro}

%\href{https://link.springer.com/journal/10801}{Journal of algebraic combinatorics}
%\href{https://www.tandfonline.com/toc/lagb20/current}{Communications in Algebra}
%\href{https://link.springer.com/journal/11083}{Order}

Let $G$ be a finite group, $\Pi$ a finite $G$-poset, and $r \geq 1$ a
natural number.  The \emph{$r$th equivariant reduced \Euc\/} of the
$G$-poset $\Pi$ as defined by Atiyah and Segal \cite{atiyah&segal89}
is the normalised sum
\begin{equation}\label{defn:rchiPiG}
  \rchi_r(\Pi,G) = \frac{1}{|G|} \sum_{X \in \Hom{\Z^r}G} \rchi(C_{\Pi}(X(\Z^r))
\end{equation}
of the reduced \Euc s of the $X(\Z^r)$-fixed $\Pi$-subposets,
$C_{\Pi}(X(\Z^r))$, as $X$ ranges over the set of all homo\m s of
$\Z^r$ to $G$. For example, when $G$ acts trivially on $\Pi$,
$\rchi_r(\Pi,G) = \rchi(\Pi) |\Hom{\Z^r}G|/|G|$ is
the reduced \Euc\ of $\Pi$ times
the number of conjugacy classes
of commuting $(r-1)$-tuples of elements of $G$ \cite[Lemma
4.13]{HKR2000}.
Here are three more examples of already known equivariant reduced \Euc s:
\begin{enumerate}
\item The symmetric group $\Sigma_n$ acts on the poset
  $B(n)^*$ of non-extreme subsets of the $n$-set.
The generating function for the $(r+1)$th  equivariant reduced \Euc s of the $\Sigma_n$-posets $B(n)^*$ is
  \begin{equation*}
  1+\sum_{n \geq 1}\rchi_{r+1}(B(n)^*,\Sigma_n)x^n = \prod_{n \geq 1} (1-x^n)^{\lambda_r(n)}
\end{equation*}
where $\lambda_r(n)$ is the number of index $r$-subgroups of $\Z^n$
(\cite[Theorem~2.1]{bryan-fulman98}  with $M=\{0,1\}$).
\item The symmetric group $\Sigma_n$ acts on the poset
  $\Pi(n)^*$ of non-extreme partitions of the $n$-set. The equivariant reduced
  \Euc s  of the $\Sigma_n$-posets $\Pi(n)^*$ are
  \begin{equation*}
       \rchi_r(\Pi(n)^*, \Sigma_n) = \frac{1}{n}c_r(n)    
  \end{equation*}
where $c_r$ is the sequence with Dirichlet convolution $(c_r \ast \lambda_r)(n) = (-1)^{n+1}$
\cite[Theorem~1.3, Corollary~1.4]{jmm:partposet2017}.
\item The general linear group $\GL nq$ acts on the poset $\Li_n^+(\F_q)^*$ of non-extreme subspaces of the $n$-dimensional vector space over the field $\F_q$ of prime power order $q$. The generating function
   for the $(r+1)$th equivariant reduced \Euc s of the $\GL nq$-posets $\Li_n^+(\F_q)^*$ is
\begin{equation*}
    1 + \sum_{n \geq 0} \rchi_{r+1}(\Li_n^+(\F_q), \GL nq) x^n = \prod_{0 \leq j \leq r}
  (1-q^jx)^{(-1)^{r-j}\binom{r}{j}}
\end{equation*}
according to \cite[Theorem~1.4]{jmm:eulergl+}.
\end{enumerate}

In this article we shall consider the general unitary group
$\gu$, the isometry group of the unitary
$n$-geometry over the field $\F_{q^2}$, acting on the poset
$ \Li_n^-(\F_q)^* =\{ 0 \subsetneq U \subsetneq \F_{q^2}^n \mid U
\subseteq U^\perp \} $ of nontrivial totally isotropic subspaces.
(See Section~\ref{sec:building} for more details.)
We now emphasise the definition of the equivariant reduced \Euc s in this particular case and
proceed to present the main results of this paper.

\begin{defn}\cite{atiyah&segal89}\label{defn:chir}
  The $r$th equivariant reduced \Euc\ of the $\gu$-poset
  ${\Li}_n^-(\F_q)^*$ is the normalised sum
\begin{equation*}
 \rchi_r({\Li}_n^-(\F_q)^*,\gu) = \frac{1}{|\gu|}
   \sum_{X \in \Hom {\Z^r}{\gu}} \rchi(C_{\Li_n^-(\F_q)^*}(X(\Z^r))) 
%  \sum_{X \in \Hom {\Z^r}{\GL nq}} \rchi(\Li^*_n(\F_q)^{X(\Z^r)}) 
\end{equation*}
of the \Euc s of the induced subposets $C_{\Li_n^-(\F_q)^*}(X(\Z^r))$ of
% $\Li^*_n(\F_q)^{X(\Z^r)}$ of
$X(\Z^r)$-invariant subspaces as $X$ ranges over all homo\m s of the
free abelian group $\Z^r$ on $r$ generators into the general unitary
group.
\end{defn}

The generating function for the {\em negative\/}  $r$th equivariant reduced \Euc s is the power series
\begin{equation}\label{eq:genfct}
  \FGL -r(q,x) = 1 - \sum_{n \geq 1} \rchi_r(\GU nq)x^n
\end{equation}
with coefficients in the ring of integral polynomials in $q$.  (The
shortened notation $\rchi_r(\gu)$ is and will be used for the $r$th
equivariant reduced \Euc\ $\rchi_r({\Li}_n^-(\F_q)^*,\gu)$ of
Definition~\ref{defn:chir}.)

\begin{thm}\label{thm:main}
  %% concl.gu.prg function FGU
  $\displaystyle \FGL -{r+1}(q,x) = \prod_{0 \leq j \leq r}
  (1+(-1)^{r-j}q^{j}x)^{(-1)^{r-j} \binom rj}$ for all $r \geq 0$. 
 %  \begin{equation*}
%     \operatorname{FGL}^-_{r+1}(x) = \prod_{0 \leq j \leq r}
%     (1+(-1)^jq^{r-j}x)^{(-1)^j \binom rj} =
%     \frac{\prod_{\substack{0 \leq j
%           \leq n \\ \text{$j$
%           even}}}(1+q^{r-j}x)^{\binom{r}{j}}}
%     {\prod_{\substack{0 \leq j \leq n \\ \text{$j$
%             odd}}}(1-q^{r-j}x)^{\binom{r}{j}}}
%   \end{equation*}
%   \begin{equation*}  //this version agrees best with FGL^+ and FSp
%     \operatorname{FGL}^-_{r+1}(x) = \prod_{0 \leq j \leq r}
%   (1+(-1)^{r-j}q^{j}x)^{(-1)^{r-j} \binom rj} =
% \frac
%     {\displaystyle\prod_{\substack{0 \leq j \leq r \\ j \equiv r \bmod 2}}(1+q^{j}x)^{\binom{r}{j}}}
%     {\displaystyle\prod_{\substack{0 \leq j \leq r\\ j \not\equiv r
%           \bmod 2}}(1-q^{j}x)^{\binom{r}{j}}}
%   \end{equation*}
  \end{thm}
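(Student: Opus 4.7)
\emph{Strategy.} My plan is to reduce the theorem to the two-term recursion
\[\operatorname{FGL}^-_{r+1}(x) = \frac{\operatorname{FGL}^-_r(qx)}{\operatorname{FGL}^-_r(-x)} \qquad (r \geq 1),\]
together with the base case $\operatorname{FGL}^-_1(x) = 1 + x$. Granted these, a direct induction using $\binom{r}{j} = \binom{r-1}{j-1} + \binom{r-1}{j}$ reshuffles the factors of $\operatorname{FGL}^-_r(qx)$ and $\operatorname{FGL}^-_r(-x)$ into the closed product asserted in the theorem, with the sign $(-1)^j$ in each exponent arising from the $-x$-substitution in the denominator. The base case amounts to computing $\rchi_1(\gu)$, which is $-1$ for $n=1$ (the poset is empty) and $0$ for $n \geq 2$; the latter is a Burnside-averaged Euler characteristic of a spherical Tits building, and vanishes by the standard Solomon--Tits/Steinberg-module cancellation applied to the unitary building.

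\emph{Semisimple reduction.} For the inductive step I view a homomorphism $X\colon \Z^{r+1} \to \gu$ as a pair $(Y,y)$ with $Y\colon \Z^r \to \gu$ and $y \in C_{\gu}(Y(\Z^r))$. The unipotent part of $y$ stabilises each $y_s$-eigenspace and produces a contracting homotopy on the invariant poset, so it contributes trivially to $\rchi$ and I may assume $y$ semisimple. A semisimple $y$ induces an isotypic decomposition $V = \bigoplus_f V_f$ indexed by the monic irreducible factors $f \in \F_{q^2}[t]$ of its characteristic polynomial, and the Hermitian form on $V = \F_{q^2}^n$ forces $V_f \perp V_g$ unless $g = \tilde f$, where $\tilde f(t) = t^{\deg f} \bar f(1/t)/\bar f(0)$. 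This partitions the irreducible factors into self-adjoint ones $f = \tilde f$ (on which $V_f$ inherits a Hermitian form over an extended field and the centraliser of $y$ on $V_f$ is a smaller unitary group) and conjugate pairs $\{f,\tilde f\}$ (on which $V_f \oplus V_{\tilde f}$ is a hyperbolic space with $\operatorname{GL}$-type centraliser acting diagonally via duality).

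\emph{Poset factorization and assembly.} The critical structural claim is that the poset $C_{\Li_n^-(\F_q)^*}(\langle Y,y\rangle)$ is isomorphic to the join, over the blocks of the above decomposition, of the invariant totally isotropic subposets of each block; this reduces to showing that a $y$-invariant totally isotropic $U \subseteq V$ splits as $U = \bigoplus_f(U \cap V_f)$ with paired components $U\cap V_f$ and $U\cap V_{\tilde f}$ in duality. Since $\rchi$ of a join is, up to sign, the product of the $\rchi$'s of the factors, the equivariant Euler characteristic $\rchi_{r+1}(\gu)$ becomes multiplicative across blocks after Burnside-averaging $Y$ separately in each centraliser factor. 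Collecting the contributions into an Euler product over monic irreducibles in $\F_{q^2}[t]$, using the cyclotomic identity $\prod_d (1 - x^d)^{-M_q(d)} = (1-qx)^{-1}$ and its $\tilde{}$-equivariant refinement, then transforms the Euler product into the ratio $\operatorname{FGL}^-_r(qx)/\operatorname{FGL}^-_r(-x)$: the factor $qx$ comes from the self-adjoint blocks (field-of-definition shift $q \leadsto q^d$ assembled across $d$), while the $-x$ in the denominator records the sign twist inherent in the paired $\operatorname{GL}$-type contributions.

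\emph{Main obstacle.} The hardest step is verifying the poset-level factorization: that any $X(\Z^{r+1})$-invariant totally isotropic subspace really decomposes as the orthogonal sum of its intersections with the isotypic blocks, and that the resulting bijection of subposets is an order-theoretic join rather than merely a product. Once this is established, the generating-function bookkeeping---tracking how each irreducible polynomial degree contributes a power-series factor with the right shift and sign---is essentially a mechanical application of the $\log$ of an Euler product, but it must be done carefully enough to separate the self-adjoint (unitary) and paired (linear) contributions, since these are precisely what create the alternating sign pattern $(-1)^j$ in the theorem's closed form.
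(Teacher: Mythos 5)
Your skeleton is essentially the paper's: reduction to $q$-regular classes via equivariant contractibility, the orthogonal isotypic decomposition, and the block-by-block poset factorization are exactly the content of Lemma~\ref{lemma:recur}, and the functional recursion you aim for, $\operatorname{FGL}^-_{r+1}(x)=\operatorname{FGL}^-_r(qx)/\operatorname{FGL}^-_r(-x)$, is a true reformulation of the theorem (it is \eqref{eq:expform} in disguise), so the endgame induction via Pascal's rule would indeed work, and your Steinberg/Solomon--Tits argument for the base case is a legitimate substitute for the paper's appeal to Webb in Proposition~\ref{prop:neq1}. The step you single out as the main obstacle (that an invariant totally isotropic $U$ splits as $\bigoplus_f(U\cap V_f)$, and that the reduced poset of the product is the join of the reduced factors) is in fact routine. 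The genuine gap is elsewhere: a conjugate pair $\{s_j,\bar s_j\}$ contributes the poset of orthogonal pairs $(U_1,U_2)$ in the hyperbolic block, which is a \emph{suspension} of the full subspace poset $\Li^+_{m^+}(\F_{q^{2d^+}})^*$, so its contribution is $+\rchi_r(\GL{m^+}{q^{2d^+}})$ --- an equivariant Euler characteristic of a \emph{general linear} group on the full subspace poset, not anything a priori expressible through $\operatorname{FGL}^-_r$. To land on a recursion purely in the unitary series you must either import the closed form of $\operatorname{FGL}^+_r$ from the general linear paper or prove the Ennola-type relation \eqref{eq:FGLpm}; your sketch supplies neither, and this is precisely what the paper's $T^\pm$-formalism (the rule $T^\pm(F(qx))=T^\mp(F(x))(qx)$, with \eqref{eq:FGLpm} established inductively ``as we go along'') is designed to handle. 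Moreover your attribution of the factors is not how the identity splits: already for $r=1$ the self-dual blocks alone give $\prod_{d\ \mathrm{odd}}(1+x^d)^{A^-(d)(q)}=(1+x)^2\prod_{d\ \mathrm{odd}}(1+x^d)^{A(d)(q)}$, which is not $1+qx$; numerator and denominator of $\operatorname{FGL}^-_r(qx)/\operatorname{FGL}^-_r(-x)$ each mix both kinds of blocks.

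The second understatement is the ``cyclotomic identity and its $\widetilde{\phantom{f}}$-equivariant refinement.'' Collapsing the Euler product requires knowing that self-dual irreducible polynomials over $\F_{q^2}$ exist only in odd degree, the count $q^m+q^{m-1}$ of self-dual monic polynomials, the M\"obius formula $A^-(d)(q)=\tfrac1d\sum_{e\mid d}\mu(d/e)(q^e+1)$, and the relations $A^+(d)(q)=A(2d)(q)$ for $d>1$ with the $d=1$ correction; this is the paper's Corollary~\ref{cor:sdcount}, Proposition~\ref{prop:oddm}, Proposition~\ref{prop:A1nq2} and Corollary~\ref{cor:Apmq}, and it is only with these in hand that the product evaluates as in Lemma~\ref{lemma:Tpm}. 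Calling the remaining bookkeeping ``mechanical'' hides both this enumerative input and the missing GL/Ennola input; as written, the passage from the block decomposition to $\operatorname{FGL}^-_r(qx)/\operatorname{FGL}^-_r(-x)$ cannot be carried out.
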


The first few instances of the generating function 
\begin{equation*}
  \FGL -{r+1}(q,x) = 
  \prod_{0 \leq j \leq r} (1+(-1)^{r-j}q^{j}x)^{(-1)^{r-j} \binom rj}
  =
  \frac{\displaystyle \prod_{j \equiv r \bmod 2}
    (1+q^{j}x)^{\binom rj}}
  {\displaystyle \prod_{j \not\equiv r \bmod 2}
    (1-q^{j}x)^{\binom rj}}
\end{equation*}
are
\begin{equation*}
  1+x, \quad
  \frac{1+qx}{1-x}, \quad
  \frac{(1+q^2x)(1+x)}{(1-qx)^2}, \quad
  \frac{(1+q^3x)(1+qx)^3}{(1-q^2x)^3(1-x)}, \quad
  \frac{(1+q^4x)(1+q^2x)^6(1+x)}{(1-q^3x)^4(1-qx)^4}
\end{equation*}
for $r+1=1,2,3,4$. In particular, $-\rchi_2(\GU nq) = q+1$ and
$-\rchi_3(\GU nq) = nq^{n-1}(q+1)^2$ for $n \geq 1$.

The proofs of Theorem~\ref{thm:main} and its corollary below are in Section~\ref{sec:computation}.

% An easy computation of the logarithm of the generating function leads
% to the following alternative expression.

\begin{cor}\label{cor:expform}
  $\displaystyle
   %% NUMERICAL TABLES in concl.gu.prg
  \FGL -{r+1}(q,x)
 =\exp\big( -\sum_{n \geq 1} (-1)^n
  (q^n-(-1)^n)^r \frac{x^n}{n} \big)
  %=\exp\big(- \sum_{n \geq 1} ((-q)^n-1)^r \frac{x^n}{n} \big)
$ for all $r \geq 0$.
\end{cor}

We also consider, for any prime $p$, the $p$-primary equivariant  reduced \Euc s, $\rchi_r(p,\GU nq)$, 
for the $\GU nq$-poset $\Li_n^-(\F_q)^*$ (Definition~\ref{defn:pprimeuc})
as defined by Tamanoi \cite{tamanoi2001}.
It turns out that the $r$th $p$-primary generating
function at $q$, the generating function $\FGL -r(p,q,x)$ for the
negative $r$th $p$-primary equivariant reduced \Euc s 
\eqref{eq:primgenfct}, is in some sense a $p$-local version
of the exponential form of $\FGL -{r}(q,x)$ from
Corollary~\ref{cor:expform}. (We write $n_p$ for the $p$-part of the
natural number $n$.)
%%%%%%%%%%%%%%%
% When $p$ divides $q$,
% $\FSp r(p,q,x)=1$ for all $r \geq 1$, and when $p$ does not divide
% $q$, $\FSp r(p,q,x)$ depends only on the closure $\overline{\gen q}$
% of the cyclic subgroup generated by $q$ in the topological group
% $\Z_p^\times$ of $p$-adic units.

% /Users/jespermoller/projects/euler/magma/suposet chirpgu2.tex
% chirpgu3.tex chirpgu4.tex
\begin{thm} \label{thm:mainprim}
$\displaystyle
    \FGL{-}{r+1}(p,q,x)
      =\exp\big( -\sum_{n \geq 1}(-1)^{n}(q^n-(-1)^{n})_p^r \frac{x^n}{n} \big)
 $
  for all $r \geq 0$.
\end{thm}

  The infinite product
  expansions %% concl.gu.prg see b:=func<
  of the absolute and the $p$-primary generating functions 
  \begin{alignat*}{3} %\label{eq:infprodform}
    &\FGL -{r+1}(q,x)  = \prod_{n \geq 1}
    (1-x^n)^{a_{r+1}^-(q,n)}
    && \qquad
    &&a_{r+1}^-(q,n) = \frac{1}{n} \sum_{d \mid n} (-1)^d \mu(n/d)
    (q^d-(-1)^d)^r \\
    &\FGL -{r+1}(p,q,x)  = \prod_{n \geq 1}
    (1-x^n)^{a_{r+1}^-(p,q,n)}
    && \qquad
    &&a_{r+1}^-(p,q,n) = \frac{1}{n} \sum_{d \mid n} (-1)^d \mu(n/d)
    (q^d-(-1)^d)_p^r
  \end{alignat*}
  follow immediately from Theorems~\ref{thm:main} and
  \ref{thm:mainprim} using the elementary \cite[Lemma~3.7]{jmm:eulergl+}.

 More explicitly, the equivariant \Euc s and the $p$-primary \Euc s of the general unitary group are 
  \begin{equation*}
  -\rchi_{r+1}(\GU nq) = 
  \frac{1}{|W_n|} \sum_{w \in W_n} \det(w) \det(q + w)^r, \quad
  -\rchi_{r+1}(p,\GU nq) = 
  \frac{1}{|W_n|} \sum_{w \in W_n} \det(w) \det(q + w)^r_p
\end{equation*}
where the sum ranges over the elements of the standard $n$-dimensional integral permutation representation $W_n$ of the symmetric group $\Sigma_n$ (Propositions~\ref{prop:altglpm}, \ref{prop:altglpmpprim}). 

This paper is organised as follows. Section~\ref{sec:building} describes the general unitary group $\GU nq$ as a subgroup of $\GL n{q^2}$. Characteristic \pol s for elements of $\GU nq$ are self-dual and Section~\ref{sec:self-dual-polyn} is an exposition of the combinatorics of self-dual irreducible monic \pol s over $\F_{q^2}$. The main result of Section~\ref{sec:equiv-reduc-euc} is the product formula of Lemma~\ref{lemma:chiproduct} for equivariant \Euc s.  Section~\ref{sec:semi-simple-unitary} establishes the key ingredients in the proof of Theorem~\ref{thm:main}: a vanishing result (Lemma~\ref{lemma:contractability})
  and a recursion formula (Lemma~\ref{lemma:recur}).
   Theorem~\ref{thm:main} and Corollary~\ref{cor:expform} are proved in Section~\ref{sec:computation}.
  This section also connects the equivariant \Euc s of the general unitary group to representation theory  and algebraic geometry:  
  Remark~\ref{rmk:KRC} explains the role of the second equivariant \Euc\ $\rchi_2(\GU nq)$ in the
  Kn\"orr-Robinson conjecture for $\GU nq$ at the defining characteristic \cite{knorr_robinson:89},
  and
  Subsection~\ref{sec:conn-hasse-weil} points out a curious coincidence with
  Hasse--Weil zeta functions of supersingular elliptic curves over $\F_{q^2}$. 
  In Section~\ref{sec:power-seri-expans-1} we develop more explicit expressions for the equivariant \Euc s.
  (The formulas of Proposition~\ref{prop:altglpm} may indicate a general description of the equivariant \Euc s of finite groups of Lie type.)  In Section~\ref{sec:mult-t_a-transf} we
  shortly review  the $S$-transform and use it to (re)prove  \pol\ identities associated to partitions.
  Section~\ref{sec:p-prim-equiv} discusses $p$-primary equivariant reduced \Euc s of general unitary groups for a given prime $p$. The corresponding {\em unreduced\/} \Euc s can be interpreted as \Euc s computed in Morava $K$-theories at $p$ of the homotopy orbit spaces $\B\!\Li^-_n(\F_q)^*_{h\!\gu}$ for the action of $\GU nq$ on the classifying space of the poset $\Li_n^-(\F_q)^*$. The proof of Theorem~\ref{thm:mainprim} together with more explicit expressions for the $p$-primary equivariant \Euc s $\rchi_{r}(p,\GU nq)$ can be found here.

The following notation will be used in this paper 
in addition to notation related to multisets introduced at the beginning of Subsection~\ref{sec:power-seri-expans-1}:

\begin{tabular}[h]{c|l}
  $p$ & a prime number \\
  $\nu_p(n)$ &  the $p$-adic valuation of $n$ \\
  $n_p$ &  the $p$-part of the natural number $n$ ($n_p=p^{\nu_p(n)}$) \\
  $\Z_p$ &  the ring of $p$-adic integers \\
  $q$ &  a prime power \\
  $\F_q$ &  the finite field with $q$ elements \\
  $s$ &  the characteristic of $\F_q$ \\
  $\rchi_r(\operatorname{GL}^\pm_n(\F_q))$ & equivariant \Euc\ $ \rchi_r(\Li_n^\pm(\F_q)^*, \operatorname{GL}^\pm_n(\F_q))$ (Definition~\ref{defn:chir}) \cite[Definition~1.2]{jmm:eulergl+}   \\
  $\rchi_r(p,\operatorname{GL}^\pm_n(\F_q))$ & $p$-primary \Euc\ $ \rchi_r(p,\Li_n^\pm(\F_q)^*, \operatorname{GL}^\pm_n(\F_q))$ (Definition~\ref{defn:pprimeuc}) \cite[Definition~4.2]{jmm:eulergl+} \\
  $\binom m{-k}$ & the signed binomial coefficient $(-1)^k\binom mk$                                                 
\end{tabular}

\section{The general unitary group $\GU nq$}
\label{sec:building}
% sun.pdf in bmo version3 and tame.pdf in bmo version6
Let $q$ be a prime power, $n \geq 1$ a natural number, and
$V_n(\F_{q^2}) = \F_{q^2}^n$ the vector space of dimension $n$ over
the field $\F_{q^2}$ with $q^2$ elements. The non-degenerate
sesquilinear form 
\begin{equation}\label{eq:unitaryform}
   \langle u,v \rangle = c\sum_{1 \leq i \leq n} (-1)^{i+1} u_iv_{n+1-i}^q \qquad
   u,v \in V_n(\F_{q^2})
\end{equation}
is Hermitian ($\gen{au,v} = a \gen{u,v}$, $\gen{u,v}^q = \gen{v,u}$,
$a \in \F_{q^2}$, $u,v \in V_n(\F_{q^2})$) when the constant
$c \in \F_{q^2}$ satisfies $c^{q-1}=(-1)^{n+1}$. The {\em general unitary
group\/} $\GU nq$ \cite[\S 2.7]{GLSIII} is the group of all linear auto\m
s of $V_n(\F_{q^2})$ preserving the Hermitian bilinear form
\eqref{eq:unitaryform}.  Let $\varphi_q(g)$ denote the matrix obtained
from $g \in \GL n{q^2}$ by raising all entries to the power $q$. Then
$g$ lies in $\GU nq$ if and only if $gA(\varphi_q(g))^t=A$ where $A$
is the matrix whose only nonzero entries are a string of alternating
$+1$'s and $-1$'s running diagonally from upper right to lower left
corner.  The order of $\GU nq$ is \cite[(2.6.1)]{wall63}
\cite[(3.25)]{wilson2009}
  \begin{equation*}
  |\GU nq|= (q+1) |\SU nq| = q^{\binom{n}{2}}\prod_{1 \leq
    i \leq n} (q^i-(-1)^i) = \prod_{0 \leq i \leq
    n-1}(q^n-(-1)^{n-i}q^i)  
\end{equation*}
and there is a short exact sequence
\begin{equation}\label{eq:ses}
  \begin{tikzpicture}
  \node (dia) [matrix of math nodes, column sep={1cm}]  {
    1 & \SU nq & \GU nq & C_{q+1} & 1\\};
  \draw[->]  (dia-1-3) --node [above] {$\det$} (dia-1-4);
  \draw[->] (dia-1-1)--(dia-1-2);
  \draw[->] (dia-1-2)--(dia-1-3);
  \draw[->] (dia-1-4)--(dia-1-5.189);
    \end{tikzpicture}
  \end{equation}
  where $C_{q+1}$ is the order $q+1$ subgroup of the cyclic unit group
  $\F_{q^2}^\times$. We have $|\operatorname{GL}^-_n|(q) =(-1)^n |\operatorname{GL}^+_n|(-q)$ where
   $|\operatorname{GL}_n^+|(q) = \prod_{0 \leq i \leq n-1}(q^n-q^i)$ and
   $|\operatorname{GL}^-_n|(q) = \prod_{0 \leq i \leq n-1}(q^n-(-1)^{n-i}q^i)$ are the order \pol s
   \cite[p 207]{malle-testerman2011}
  for the general linear and unitary groups.
  The {\em special unitary group\/}
$\SU nq$ is generated by root group elements $x_{\widehat{\alpha}}(t)$
or $x_{\widehat{\alpha}}(t,u)$ of type I, II, and (for odd $n$) IV
\cite[Table 2.4]{GLSIII} and the general unitary group $\GU nq$ by
root groups together with the diagonal matrices
$\diag(z,1,\ldots,1,z^{-q})$ for $z \in \F_{q^2}^\times$.

%\mynote{Might as well use $\sum x_iy_i^q$ as Hermitian form as in Carter}

A subspace of $V_n(\F_{q^2})$ is {\em totally isotropic\/} if the
Hermitian sesquilinear form \eqref{eq:unitaryform} vanishes completely
on it. Let $\Li^-_n(\F_q)$ be the poset of totally isotropic subspaces
in $V_n(\F_{q^2})$ and $\Li_n^-(\F_q)^*$ the subposet of {\em
  nontrivial\/} totally isotropic subspaces. The standard action of $\GL n{{q^2}}$ on subspaces of $V_n(\F_{q^2})$ restricts to an action of $\GU nq$ on $\Li_n^-(\F_q)^*$. The classifying simplicial complex of $\Li_n^-(\F_q)^*$, the flag complex of totally isotropic subspaces, is the building for $\GU nq$ \cite[\S 6.8]{abramenko_brown2008}.  We may replace the flag complex ${\Li}_n^-(\F_q)^*$ by the Brown subgroup poset $\cat S{\GU nq}{s+*}$ of nontrivial $s$-subgroups of $\GU nq$ where $s$ is the defining characteristic \cite[Theorem 3.1]{quillen78}.

The (non-equivariant) reduced \Euc s of the spherical posets $\Li_n^\pm(\F_q)^*$ are given by  %chilattice.prg
\begin{equation*}
    -\rchi(\Li_n^+(\F_q)^*) = (-1)^{n-1} q^{\binom n2}, \qquad
  -\rchi(\Li_n^-(\F_q)^*) = (-1)^{\lfloor n/2 \rfloor} q^{\binom n2}
\end{equation*}
according to the Solomon--Tits theorem \cite[Proposition~8.3]{curtis-lehrer-tits80} (or \cite[Example~3.10.2]{stanley97} for the case of
$\Li_n^+(\F_q)^*$).
    
% If $g \in \gu \leq \GL n{q^2}$ then
% $\chi_{g}(t) = \chi_{g^{-1}\varphi_q}(t)$.  (Remember that
% $g^{-1}\varphi_q$ is $g^{-1}$ with all entries raised to power $q$.)
% In particular, $\det(g) = \det(g^{-1})^q$ or $\det(g)^{1+q}=1$.

\section{Self-dual polynomials over $\F_{q^2}$}
\label{sec:self-dual-polyn}

In the next lemma, we consider field extensions
$\F_q \subseteq \F_{q^{m_1}} \subseteq \F_{q^{m_2}}$ where
$1 \leq m_1 \leq m_2$. Let $\sigma_0, \sigma_1, \ldots, \sigma_n$ be
the elementary symmetric \pol s in $n \geq 1$ variables \cite[Example
1.74]{lidlnieder97} (where $\sigma_0$ stands for the constant \pol\
$1$).

\begin{lemma}\label{lemma:symmetric}
 Let $a_1,\ldots,a_n$ be $n$ elements of the field $\F_{q^{m_2}}$. Then
 \begin{equation*}
   \forall i \in \{0,1,\ldots,n\} \colon \sigma_i(a_1,\ldots,a_n) \in
   \F_{q^{m_1}} \iff
   \forall i \in \{0,1,\ldots,n\} \colon \sigma_i(a_1^{-q},\ldots,a_n^{-q}) \in
   \F_{q^{m_1}}
 \end{equation*}
\end{lemma}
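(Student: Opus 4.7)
The plan is to factor the map $a_j \mapsto a_j^{-q}$ as inversion $a_j \mapsto a_j^{-1}$ followed by the Frobenius $x \mapsto x^q$, and to show that each of the two steps preserves the property that all elementary symmetric \pol s of the resulting $n$-tuple lie in $\F_{q^{m_1}}$. Note that the very appearance of $a_j^{-q}$ in the statement forces every $a_j$ to be nonzero, so in particular $\sigma_n(a_1,\ldots,a_n) = a_1 \cdots a_n \neq 0$.

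For the inversion step I would start from the \pol\ identity
\begin{equation*}
  \sigma_n(a_1,\ldots,a_n) \prod_{j=1}^n (1 + y/a_j) = \prod_{j=1}^n (a_j + y)
\end{equation*}
in the indeterminate $y$ over $\F_{q^{m_2}}$. Matching coefficients of $y^k$ on both sides yields
\begin{equation*}
  \sigma_n(a_1,\ldots,a_n)\,\sigma_k(a_1^{-1},\ldots,a_n^{-1}) = \sigma_{n-k}(a_1,\ldots,a_n), \qquad 0 \leq k \leq n.
\end{equation*}
If all $\sigma_i(a_1,\ldots,a_n)$ lie in $\F_{q^{m_1}}$, then $\sigma_n(a_1,\ldots,a_n)$ is a nonzero element of the field $\F_{q^{m_1}}$ and dividing shows $\sigma_k(a_1^{-1},\ldots,a_n^{-1}) \in \F_{q^{m_1}}$ for every $k$. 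The converse direction comes by reapplying the same identity to the tuple $(a_1^{-1},\ldots,a_n^{-1})$ in place of $(a_1,\ldots,a_n)$.

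For the Frobenius step I would use that each $\sigma_i$ is a \pol\ with integer coefficients, so
\begin{equation*}
  \sigma_i(a_1^{-q},\ldots,a_n^{-q}) = \sigma_i(a_1^{-1},\ldots,a_n^{-1})^q.
\end{equation*}
The Frobenius $\phi_q \colon x \mapsto x^q$ is an automorphism of $\F_{q^{m_2}}$ that restricts to an automorphism of the subfield $\F_{q^{m_1}}$ (characterized by $x^{q^{m_1}} = x$); hence $c \in \F_{q^{m_1}}$ if and only if $c^q \in \F_{q^{m_1}}$ for every $c \in \F_{q^{m_2}}$. Composing this equivalence with the inversion step yields the lemma.

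The main obstacle is the coefficient identity of the inversion step: once it is stated in a form symmetric enough under the swap $(a_j) \leftrightarrow (a_j^{-1})$ to give both directions of the equivalence, the Frobenius step is immediate from the fact that $x \mapsto x^q$ is a bijection on the subfield $\F_{q^{m_1}}$.
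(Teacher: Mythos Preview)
Your proof is correct and follows essentially the same route as the paper: the key identity $\sigma_n(a_1,\ldots,a_n)\,\sigma_k(a_1^{-1},\ldots,a_n^{-1}) = \sigma_{n-k}(a_1,\ldots,a_n)$ handles the inversion, and the relation $\sigma_i(a_1^{-q},\ldots,a_n^{-q}) = \sigma_i(a_1^{-1},\ldots,a_n^{-1})^q$ handles the Frobenius. You are in fact slightly more thorough than the paper, which only writes out the forward implication and leaves the converse implicit; your explicit treatment of both directions via the symmetry of the inversion identity and the bijectivity of $x \mapsto x^q$ on $\F_{q^{m_1}}$ is a welcome addition.
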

\begin{proof}
  The $n$th elementary symmetric function is $\sigma_n(a_1,\ldots,a_n)
  = a_1 \cdots a_n$. Observe that
  \begin{equation*}
    \forall i \in \{0,1,\ldots,n\} \colon
    \sigma_i(a_1^{-1},\ldots,a_n^{-1})
  \sigma_n(a_1,\ldots,a_n) = \sigma_{n-i}(a_1,\ldots,a_n)
  \end{equation*}
  If all values of $\sigma_i(a_1,\ldots,a_n)$ are in the subfield
  $\F_{q^{m_1}}$, also all values of
  $\sigma_i(a_1^{-1},\ldots,a_n^{-1})$ and
  $\sigma_i(a_1^{-q},\ldots,a_n^{-q})
  =\sigma_i(a_1^{-1},\ldots,a_n^{-1})^q$ are in this subfield.
\end{proof}

\begin{defn}[Dual \pol ] \cite[Notation p.\ 13]{wall63}\label{defn:action}
  Let
  $p(x) = a_0x^m + a_1x^{m-1} + \cdots + a_{m-1}x + a_m \in
  \F_{q^2}[x]$ be a \pol\ of degree $m \geq 1$ with nonzero constant
  term (so that $a_0 \neq 0$ and $a_m \neq 0$). The dual \pol\ to $p(x)$ is
  \begin{equation*}
    \overline p(x) = a_0 \prod_{1 \leq i \leq m} (x-\alpha_i^{-q})
  \end{equation*}
  where $p(x) = a_0 \prod_{1 \leq i \leq m} (x-\alpha_i)$ with
  $\alpha_1,\ldots,\alpha_m$ in the splitting field for $p(x)$ over
  $\F_{q^2}$.
  % LIdl and Niederreiter p 35
  If $p(x) = \overline p(x)$ we say that $p(x)$ is self-dual.
  %%%%%%%%%%%%%%%%
  % For any monic \pol\ $f \in \F_{q^2}[x]$ with $f(0) \neq 0$,
  % $\overline f \in \F_{q^2}[x]$ denotes the \pol\ with factorisation
  % $\overline f = \prod (x-a_i^{-q})$ in a splitting field for
  % $f = \prod (x-a_i)$.
\end{defn}

%\mynote{\cite{fulman_vinroot2014}?}

We note that
\begin{itemize}
\item  dualization is involutory: $\overline{\overline p}
  = p$
\item dualization respects products:
  $\overline{p_1p_2} = \overline p_1 \overline p_2$
  % where $p_1$ and
  % $p_2$ are \pol s with nonzero constant terms.
 \item  dualization respects divisibility: $p_1 \mid p_2 \iff \overline p_1
\mid \overline p_2$
\item a \pol\ (with nonzero constant term) is irreducible if and only
  its dual \pol\ is irreducible
\item if $p = a_0 \prod r_i^{e_i}$ is the canonical factorisation of
  the \pol\ $p$ \cite[Theorem 1.59]{irelandrosen90} then
  $\overline p = a_0 \prod \overline r_i^{e_i}$ is the canonical
  factorisation of the dual \pol
\end{itemize}

Although the dual of a \pol\ over $\F_{q^2}$ is defined in terms of elements of an extension of $\F_{q^2}$, it is actually again a \pol\ over $\F_{q^2}$ as Lemma~\ref{lemma:symmetric} shows that the coefficients of $\overline{p}(x)$ lie in $\F_{q^2}$ if those of $p(x)$ do.

 \begin{prop}\label{prop:sdcrit}
   Let
   $p(x) = a_0x^m + a_1x^{m-1} + \cdots + a_{m-1}x + a_m \in
   \F_{q^2}[x]$ be a \pol\ as in Definition~\ref{defn:action} with
   $a_0 \neq 0$ and $a_m \neq 0$. The dual \pol\ $\overline p(x)$ is given
   by
   \begin{equation*}
     a_m^q \overline p(x) = a_0(a_m^qx^m + a_{m-1}^qx^{m-1}+\cdots+a_1^qx+a_0^q)
   \end{equation*}
   and $p(x)$ is self-dual if and only if its coefficients satisfy the
   equation
   \begin{equation*}
     a_m^q(a_0,a_1,\ldots,a_{m-1},a_m) = a_0(a_m^q,a_{m-1}^q,\ldots,a_1^q,a_0^q)
   \end{equation*}
 \end{prop}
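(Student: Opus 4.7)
The plan is to compute $\overline p(x)$ directly from its defining product expansion and match coefficients, reusing the symmetric-function identity that appears in the proof of Lemma~\ref{lemma:symmetric}. Write $p(x) = a_0\prod_{i=1}^m (x-\alpha_i)$ and expand
\[
 \overline p(x) = a_0\prod_{i=1}^m (x-\alpha_i^{-q})
  = a_0 \sum_{i=0}^m (-1)^i \sigma_i(\alpha_1^{-q},\ldots,\alpha_m^{-q})\, x^{m-i}.
\]
So I just need to express each $\sigma_i(\alpha_1^{-q},\ldots,\alpha_m^{-q})$ in terms of the coefficients $a_0,\ldots,a_m$.

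First I would apply Frobenius to move the $q$-th power outside: $\sigma_i(\alpha_1^{-q},\ldots,\alpha_m^{-q}) = \sigma_i(\alpha_1^{-1},\ldots,\alpha_m^{-1})^q$. Next, the identity displayed inside the proof of Lemma~\ref{lemma:symmetric}, namely
\[
 \sigma_i(\alpha_1^{-1},\ldots,\alpha_m^{-1})\,\sigma_m(\alpha_1,\ldots,\alpha_m) = \sigma_{m-i}(\alpha_1,\ldots,\alpha_m),
\]
combined with Vieta's formulas $\sigma_j(\alpha_1,\ldots,\alpha_m) = (-1)^j a_j/a_0$, gives $\sigma_i(\alpha_1^{-1},\ldots,\alpha_m^{-1}) = (-1)^i a_{m-i}/a_m$. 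Raising to the $q$-th power and substituting back, the signs $(-1)^i$ cancel and one obtains
\[
 \overline p(x) = \frac{a_0}{a_m^q} \sum_{i=0}^m a_{m-i}^q\, x^{m-i},
\]
which after reindexing $i\mapsto m-i$ and clearing the denominator is exactly the asserted formula $a_m^q\overline p(x) = a_0(a_m^q x^m + a_{m-1}^q x^{m-1} + \cdots + a_1^q x + a_0^q)$.

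The self-duality criterion is then immediate: $p(x) = \overline p(x)$ is equivalent to $a_m^q p(x) = a_m^q \overline p(x)$, and reading off coefficients yields the stated vector equation $a_m^q(a_0,\ldots,a_m) = a_0(a_m^q,\ldots,a_0^q)$. There is no real obstacle here; the only thing to watch is the sign bookkeeping, which fortunately collapses because two factors of $(-1)^i$ — one from the Vieta substitution and one from the expansion $\prod(x-\alpha_i^{-q})$ — multiply to $1$. The role of Lemma~\ref{lemma:symmetric} in the background is to guarantee that the resulting coefficients really lie in $\F_{q^2}$, so that the formula makes sense as a polynomial identity over $\F_{q^2}$ rather than merely over the splitting field.
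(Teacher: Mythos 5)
Your proof is correct and follows essentially the same route as the paper: invert the roots, use the coefficient-reversal identity, apply Frobenius, and clear the denominator $a_m^q$ to read off the self-duality criterion. The only cosmetic difference is that the paper packages the step $\prod_i(x-\alpha_i^{-1}) = (a_mx^m+\cdots+a_0)/a_m$ via the reciprocal polynomial $p^*(x)=x^mp(x^{-1})$, whereas you derive it by hand from the symmetric-function identity of Lemma~\ref{lemma:symmetric} and Vieta's formulas (and your sign bookkeeping is fine, since $((-1)^i)^q=(-1)^i$ in characteristic dividing $q$).
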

 \begin{proof}
   The reciprocal \cite[Definition 3.12]{lidlnieder97}  to the \pol\ $p(x)$ is 
   \begin{equation*}
     p^*(x) = x^mp(x^{-1}) = a_mx^m + a_{m-1}x^{m-1} + \cdots + a_{1}x
     + a_0 = a_m \prod_{1 \leq i \leq m} (x-\alpha_i^{-1})
   \end{equation*}
    and thus
    \begin{equation*}
      a_m^q \prod_{1 \leq i \leq m} (x-\alpha_i^{-q}) =
      a_m^qx^m + a_{m-1}^qx^{m-1} + \cdots + a_{1^q}x+ a_0^q
    \end{equation*}
     since the Frobenius map $\sigma_q(x)=x^q$ is a field auto\m\ of $\F_{q^2}$.
     The dual \pol\ is
    \begin{equation*}
      \overline p(x) =  a_0 \prod_{1 \leq i \leq m} (x-\alpha_i^{-q})=
      \frac{a_0}{a_m^q}
      (a_m^qx^m + a_{m-1}^qx^{m-1} + \cdots + a_{1}^qx + a_0^q)
    \end{equation*}
    and hence
    \begin{multline*}
      p(x) = \overline p(x) \iff a_m^q p(x) = a_m^q \overline p(x) \\ \iff
      a_m^q(a_0x^m + a_1x^{m-1} + \cdots + a_{m-1}x + a_m) =
      a_0(a_m^qx^m + a_{m-1}^qx^{m-1} + \cdots + a_{1}^qx + a_0^q)
    \end{multline*}
    which is the criterion of the proposition.
 \end{proof}

 If $g$ is a unitary auto\m\ of a vector space over $\F_{q^2}$ and
$p(x)$ the \pol\ of Definition~\ref{defn:action} then
\begin{equation}
  \label{eq:conjugation}
  a_0^q\gen{p(g)x,y} = a_m\gen{g^m(x),\bar p(g)(y)}
\end{equation}
for all vectors $x,y$.

%% This lemma added after submission to the arxive
\begin{lemma}\label{lemma:charpolselfdual}
  The characteristic \pol\ $c_g$ of any unitary auto\m\ $g \in \GU nq$
  is self-dual.
\end{lemma}
\begin{proof}
  Let $r \in \F_{q^2}[x]$ be an irreducible \pol . Then
  \begin{equation*}
    r \nmid c_g \iff
    \text{$r(g)$ is invertible}
    \stackrel{\eqref{eq:conjugation}}{\iff}
    \text{$\bar r(g)$ is invertible} \iff
    \bar r \nmid c_g 
  \end{equation*}
  or, equivalently, $r \mid c_g \iff \bar r \mid c_g$. This shows that
  $c_g=\bar c_g$.
  % The first \iff is Prop 4.12 in "Normal forms of matrices" from
  % Users/jespermoller/notes/normalforms/normalforms.pdf
\end{proof}

 \begin{comment}
   An element $\lambda$ of $\F_{q^{2m}}$ is {\em normal\/} over
 $\F_{q^2}$ if
 $\{\lambda, \lambda^{q^2}, \ldots, \lambda^{q^{2m-2}}\}$ is a basis
 for the $\F_{q^2}$-vector space $\F_{q^{2m}}$. A {\em normal basis\/}
 is such a basis generated by a normal element \cite[Definition
 2.32]{lidlnieder97}.
\end{comment}

\begin{cor}\cite[Proof of (ii), p.\ 35]{wall63} \label{cor:sdcount}
  The number of self-dual monic \pol s in $\F_{q^2}[x]$ of degree $m > 0$
  with nonzero constant term is $q^{m}+q^{m-1}$.
\end{cor}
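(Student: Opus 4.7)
The plan is to translate self-duality into explicit coefficient equations via Proposition~\ref{prop:sdcrit} and then enumerate by pairing coefficients symmetrically about the middle. For a monic polynomial $p(x) = x^m + a_1 x^{m-1} + \cdots + a_{m-1}x + a_m$ with $a_m \neq 0$, the proposition (with $a_0 = 1$) yields the simultaneous conditions $a_m^{q+1} = 1$ and $a_m^q a_i = a_{m-i}^q$ for $1 \leq i \leq m-1$. Raising the latter to the $q$-th power and using $t^{q^2} = t$ for $t \in \F_{q^2}$ rewrites it as $a_{m-i} = a_m\, a_i^q$.

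First I would count choices for $a_m$: the equation $a_m^{q+1} = 1$ cuts out the cyclic norm-one subgroup $C_{q+1} \subset \F_{q^2}^\times$ of order $q+1$, and every such $a_m$ is automatically nonzero, so the ``nonzero constant term'' constraint is free. Fixing $a_m \in C_{q+1}$, I would count the tuples $(a_1,\dots,a_{m-1})$ satisfying $a_{m-i} = a_m a_i^q$. Group the indices $\{1,\ldots,m-1\}$ into the unordered pairs $\{i, m-i\}$ with $i \neq m-i$, together with the singleton $\{m/2\}$ when $m$ is even. On each proper pair, $a_i \in \F_{q^2}$ is chosen freely ($q^2$ possibilities) and $a_{m-i}$ is determined; consistency of the two directions of the pairing uses exactly $a_m^{q+1} = 1$.

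The one nontrivial step is counting solutions to the fixed-point equation $a_k = a_m a_k^q$ at $k = m/2$ when $m$ is even. The map $b \mapsto a_m b^q - b$ is $\F_q$-linear on $\F_{q^2}$, and its solution set is $\{0\}$ together with the fibre $\{b \in \F_{q^2}^\times : b^{q-1} = a_m^{-1}\}$. Since the $(q-1)$-power map $\F_{q^2}^\times \to \F_{q^2}^\times$ has image the subgroup of order $(q^2-1)/(q-1) = q+1$ (which is exactly $C_{q+1}$, so contains $a_m^{-1}$) and kernel $\F_q^\times$ of order $q-1$, the fibre has $q-1$ elements and there are $q$ solutions in total.

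Assembling everything, for each of the $q+1$ choices of $a_m$ the free pairs contribute $(q^2)^{\lfloor (m-1)/2 \rfloor}$ and the middle coefficient (when $m$ is even) contributes an extra factor of $q$; in either parity this equals $q^{m-1}$. The total $(q+1)\cdot q^{m-1} = q^m + q^{m-1}$ is uniform in the parity of $m$, as required. The main obstacle is the even-$m$ middle-coefficient count; the point that makes the final answer clean is that the solution count is $q$ independently of which $a_m \in C_{q+1}$ was chosen, which is why the product structure decouples.
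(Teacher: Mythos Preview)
Your proof is correct and follows essentially the same route as the paper: both reduce via Proposition~\ref{prop:sdcrit} to the conditions $a_m^{q+1}=1$ and $a_{m-i}=a_m a_i^q$, then count by pairing coefficients symmetrically and handling the middle coefficient separately when $m$ is even. Your justification that $a_k = a_m a_k^q$ has exactly $q$ solutions (via the image of the $(q-1)$-power map being $C_{q+1}$) is a bit more explicit than the paper's, but the argument is the same.
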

 \begin{proof}
  A monic \pol\ of degree $m$,
  $p(x) = x^m + a_1x^{m-1} + \cdots + a_{m-1}x + a_m \in \F_{q^2}[x]$
  with $a_m \neq 0$ is by Proposition~\ref{prop:sdcrit} self-dual if
  and only if
 \begin{equation*}
   a_m^q(a_1,a_2,\ldots,a_{m-1},a_m)=(a_{m-1}^q,\ldots,a_1^q,1)
 \end{equation*}
 or, equivalently,
 \begin{equation}\label{eq:monicsdcrit}
   a_m^{q+1}=1 \text{\ and\ } (a_1,\ldots,a_{m-1}) = a_m(a_{m-1}^q,\ldots,a_1^q)
 \end{equation}
 Suppose first that $m=2k+1$ is odd. There are $q+1$ elements $a_m$ in
 $\F_{q^2}$ such that $a_m^{q+1}=1$. For $1 \leq j \leq k$, let $a_j$
 be any element of $\F_{q^2}$ and put $a_{m-j}=a_ma_j^q$. Then
 $a_ma_{m-j}^q=a_j^{q^2}=a_j$. This shows that the self-duality
 criterion \eqref{eq:monicsdcrit} has $(q+1)q^{2k} = q^m+q^{m-1}$
 solutions. Suppose next that $m=2k$ is even. The coefficient $a_m$
 can again be chosen in exactly $q+1$ ways. For each $j$ with
 $1 \leq j \leq k-1$, the coefficient $a_j$ can be chosen freely in
 $\F_{q^2}$ and we let $a_{m-j}=a_ma_j^q$. There are $q=(q-1)+1$
 possibilities for choosing the coefficient $a_k$ such that
 $a_k=a_ma_k^q$ as $a_m^{q+1}=1$. Thus the self-duality criterion
 \eqref{eq:monicsdcrit} has $(q+1)q^{2k-2}q = q^m + q^{m-1}$
 solutions.
\end{proof}

\begin{defn}[See Figure~\ref{fig:Apm}]\label{defn:Adq2}
  For every integer $d \geq 1$,
  \begin{itemize}
  \item $\IM dq$ is the number of Irreducible Monic \pol s of degree
    $d$ over $\F_{q}$ with nonzero constant term
 % \item $A(d)(q^2)$ is the number of irreducible monic \pol s of
  %  degree $d$ over $\F_{q^2}$ with
 % nonzero constant term
\item $\SDIM -dq$  is the number of Self-Dual Irreducible Monic \pol s of
  degree $d$ over $\F_{q^2}$ with nonzero constant term % $A^1(d)(q^2)$
\item $\SDIM +dq =\frac{1}{2}(\IM d{q^2}- \SDIM -dq)$ is the number of unordered
  pairs of non-self-dual irreducible monic \pol s of degree $d$ over
  $\F_{q^2}$ with nonzero constant term
  % \mynote{See \cite[p
  %   111]{fong_srinivasan82} where they put this in even dimensions}
  \end{itemize}
\end{defn}

    \begin{figure}[t]  %% Apm.prg  Am and Ap
    \centering
%% M-x query-replace-regexp RET [0-9]+ RET \& &
   \begin{tabular}[h]{>{$}c<{$}|*{6}{>{$}c<{$}}}
     {} & d=1 & d=2  & d=3 & d=4 & d=5 & d=6  \\ \hline
     d\IM dq & q-1 & q^2-q & q^3 - q & q^4 - q^2 &
    q^5 - q &
    q^6 - q^3 - q^2 + q \\
    d\IM d{q^2}  & q^2 - 1 & q^4 - q^2 &
    q^6 - q^2 &
    q^8 - q^4 &
    q^{10} - q^2 & q^{12} - q^6 - q^4 + q^2 \\
    d\SDIM -dq  &   q+1  &   0  &   q^3-q  &   0  &   q^5-q  &    0\\
    2d\SDIM +dq  &  q^2-q-2  &   q^4-q^2  &   q^6 - q^3 - q^2 + q &
    q^8 - q^4 &
    q^{10} - q^5 - q^2 + q &
    q^{12} - q^6 - q^4 + q^2 
                                             \end{tabular}
    \caption{The \pol s $\IM dq$, $\IM d{q^2}$ and $\SDIM {\pm}dq$ for
    $d=1,\ldots,6$} % minor change to caption
    \label{fig:Apm}
  \end{figure}

  For all $d \geq 1$, $\IM d{q^2} = \sum_{d \mid n} \mu(n/d)(q^{2d}-1)$
  \cite[Corollary~3.21]{lidlnieder97}
  (simplifying to $\IM d{q^2} = \sum_{d \mid n} \mu(n/d)q^{2d}$ when
  $d>1$). The well-known identities \cite[p.\ 258]{wall99} %\cite[Chapter 2]{rosen2002}
\begin{equation}\label{eq:Adq2}
  \prod_{d \geq 1} \frac{1}{(1-x^d)^{\IM dq}} = \frac{1-x}{1-qx}, \qquad
  \prod_{d \geq 1} \frac{1}{(1-x^d)^{\IM d{q^2}}} = \frac{1-x}{1-q^2x}
\end{equation}
are easily proved using \cite[Lemma~3.7]{jmm:eulergl+}.
When $d=1$, $\IM 1{q^2} = q^2-1$ (represented by the \pol s $x-\lambda$,
$\lambda \in \F_{q^2}^\times$), $\SDIM -1q=q+1$ (represented by the
\pol s $x-\lambda$, $\lambda \in \F_{q^2}^\times$,
$\lambda=\lambda^{-q}$) and
$\SDIM +1q = \frac{1}{2}(q^2-1-(q+1)) = \frac{1}{2}(q+1)(q-2)$
(represented by the pairs $(x-\lambda,x-\lambda^{-q})$,
$\lambda \in \F^\times_{q^2}$, $\lambda \neq \lambda^{-q}$).

The next proposition shows among other things that  self-dual
irreducible % irreducible added
\pol s have odd degrees,
ie that $\SDIM -dq=0$ for all even $d$.

\begin{prop}\label{prop:oddm}
  Let $p(x) \in \F_{q^2}[x]$ be a self-dual irreducible monic \pol\ of
  degree $m \geq 1$ over $\F_{q^2}$ with $p(0) \neq 0$. Then $m$ is
  odd and
  \begin{equation*}
    p(x) = \prod_{0 \leq j \leq m-1} (x-\lambda^{q^{2j}})
  \end{equation*}
  where $\lambda \in \F_{q^{2m}}$, $\lambda^{q^m+1}=1$, and all the
  elements $\lambda,\lambda^{q^2},\ldots,\lambda^{q^{2m-2}}$ are
  distinct.
\end{prop}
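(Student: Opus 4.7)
The plan is to analyze how the dualization map $\phi\colon \alpha \mapsto \alpha^{-q}$ permutes the roots of $p(x)$. Since $p$ is irreducible of degree $m$ over $\F_{q^2}$ with nonzero constant term, its splitting field is $\F_{q^{2m}}$, and for any chosen root $\lambda \in \F_{q^{2m}}^\times$ the root set is the Galois orbit $S = \{\lambda, \lambda^{q^2}, \ldots, \lambda^{q^{2(m-1)}}\}$; separability of finite fields forces these $m$ elements to be distinct. The self-duality equation $p = \overline{p}$, in the sense of Definition~\ref{defn:action}, says exactly that $\phi$ maps $S$ onto itself.

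The decisive observation is $\phi^2(\alpha) = (\alpha^{-q})^{-q} = \alpha^{q^2} = F(\alpha)$, where $F$ denotes the Frobenius generator of $\operatorname{Gal}(\F_{q^{2m}}/\F_{q^2})$. On the $m$-element set $S$ the permutation $F$ is a single $m$-cycle, and the centralizer of an $m$-cycle in the symmetric group on $S$ is exactly the cyclic group $\gen{F}$ of order $m$. Since $\phi$ commutes with $\phi^2 = F$, we conclude $\phi \in \gen{F}$, say $\phi = F^k$, and the identity $F^{2k} = F$ in $\gen{F}$ forces the congruence $2k \equiv 1 \pmod{m}$. This congruence is solvable if and only if $m$ is odd, which settles the parity statement.

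For odd $m$ one may take $k = (m+1)/2$, so $\phi = F^{(m+1)/2}$; since $\gen{\phi} \supseteq \gen{\phi^2} = \gen{F}$ has order at least $m$ while $\phi$ itself lies in $\gen{F}$, the order of $\phi$ is exactly $m$, and $\phi^m$ acts as the identity on $S$. Evaluating $\phi^m$ at $\lambda$ yields $\lambda = \phi^m(\lambda) = \lambda^{(-q)^m} = \lambda^{-q^m}$, which rearranges to $\lambda^{q^m+1} = 1$. The claimed factorization of $p(x)$ is then just the description of the Frobenius orbit of $\lambda$ in $\F_{q^{2m}}$.

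The main obstacle is identifying $\phi$ as an element of $\gen{F}$ via the centralizer of an $m$-cycle; once this structural fact is in place, both the parity constraint and the norm-$1$ condition $\lambda^{q^m+1} = 1$ drop out directly.
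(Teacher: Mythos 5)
Your proof is correct, and its engine is the same as the paper's: the observation that applying $\alpha\mapsto\alpha^{-q}$ twice gives the Frobenius $\alpha\mapsto\alpha^{q^2}$, which turns self-duality into the congruence $2k\equiv 1\pmod m$ and hence forces $m$ odd. The packaging differs in a useful way, though. The paper works pointwise: it writes $\lambda^{-q}=\lambda^{q^{2k}}$ for the unique $k$ with $0\leq k\leq m-1$, treats $m=2$ by a separate ad hoc contradiction, and for $m>2$ must pin down $k=(m+1)/2$ (after discarding a spurious candidate) in order to read off $\lambda^{q^m+1}=1$ from $\lambda^{-q}=\lambda^{q^{m+1}}$. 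You instead globalize the dualization map $\phi$ to a permutation of the root set, use the standard fact that the centralizer of an $m$-cycle is the cyclic group it generates to conclude $\phi\in\langle F\rangle$, and then extract the norm condition from $\phi^m=\mathrm{id}$ (indeed $\phi^m(\lambda)=\lambda^{(-q)^m}=\lambda^{-q^m}$ for odd $m$); note that $\phi^m=\mathrm{id}$ already follows from $\phi\in\langle F\rangle$ by Lagrange, so even the exact-order claim is more than you need. This buys a uniform argument: all even $m$, including $m=2$, are excluded at once by the unsolvability of $2k\equiv 1\pmod m$, and the final identity $\lambda^{q^m+1}=1$ comes out without ever identifying $k$ explicitly. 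The cost is the imported group-theoretic fact about centralizers of $m$-cycles, which the paper's elementary exponent manipulation avoids.
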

\begin{proof}
  Let $p(x)$ be a monic irreducible \pol\ $p(x)$ of degree $m$ over
  $\F_{q^2}$. The field $\F_{q^{2m}}$ contains an element $\lambda$
    such that
    \begin{equation*}
    p(x) = \prod_{0 \leq j \leq m-1} (x-\lambda^{q^{2j}})
  \end{equation*}
  and all the elements
  $\lambda,\lambda^{q^2},\ldots,\lambda^{q^{2m-2}}$ are distinct
  \cite[Theorem 2.14]{lidlnieder97}. By self-duality
  $\lambda^{-q} = \lambda^{q^{2k}}$ for a unique integer $k$ with
  $0 \leq k \leq m-1$.

  Assume first that $p(x)$ has degree $m=2$. The roots of $p(x)$ are
  $\{\lambda,\lambda^{q^2}\}$ where $\lambda^{-q}$ equals $\lambda$ or
  $\lambda^{q^2}$ by self-duality. In the first case,
  $1=\lambda \lambda^{-1} = \lambda \lambda^q = \lambda^{q+1}$ and
  $\lambda^{q^2-1} = (\lambda^{q+1})^{q-1} = 1$. In the second case,
  $\lambda^q = \lambda^{-q^2} = (\lambda^{-q})^q = (\lambda^{q^2})^q =
  \lambda^{q^3}$ and $1 = \lambda^{q^3-q} = (\lambda^{q^2-1})^q$ so
  $\lambda^{q^2-1}=1$ also here.  In both cases, we have that
  $\lambda, \lambda^{q^2} \in \F_{q^2}^\times$. Since this contradicts
  irreducibility of $p(x)$ over $\F_{q^2}$, monic irreducible
  self-dual \pol s of degree $2$ do not exist.

  Assume next that $m>2$. Since
  $\lambda^{q^2} = (\lambda^{-q})^{-q} = \lambda^{q^{4k}}$ it follows
  that $m$ divides $2k-1$ and is odd.  Furthermore, $k$ equals $1$ or
  $\frac{1}{2}(m+1)$ as $k$ is at most $m-1$. However, $k=1$ implies
  $\lambda^{q^2} = \lambda^{q^{4k}} = \lambda^{q^4}$ contradicting
  that $\lambda^{q^2}$ and $\lambda^{q^4}$ are distinct when
  $m \geq 3$. From $2k=m+1$ we get
  $\lambda^{-q} =\lambda^{q^{2k}} = \lambda^{q^{m+1}}$, equivalently,
  $\lambda^{-1} = \lambda^{q^m}$ or $\lambda^{q^m+1}=1$.
\end{proof}

\begin{comment}
 Let $\varphi_{q^r}(\lambda) = \lambda^{q^r}$ for
$\lambda \in \bar \F_q^\times$ and $r \in \Z$. The Galois group
$\Gal{\F_{q^{2m}}}{\F_{q^2}}$ is cyclic of order $m$ generated by
$\varphi_{q^2}$. Let $I(q^{2m},q^2)$ be the subset of
$\F_{q^{2m}}^\times$ where $\Gal{\F_{q^{2m}}}{\F_{q^2}}$ acts
freely. The orbit set $I(q^{2m},q^2)/\Gal{\F_{q^{2m}}}{\F_{q^2}}$
corresponds to the set of monic irreducible \pol s of degree $m$ over
$\F_{q^2}$ and the orbit set
\begin{equation*}
  \{ \lambda \in I(q^{2m},q^2) \mid
  \lambda^{q^m+1}=1 \}/\Gal{\F_{q^{2m}}}{\F_{q^2}}  
\end{equation*}
to the set of monic irreducible self-dual \pol s of degree $m$ over
$\F_{q^2}$. (If $\lambda^{q^{m}+1} =1$ then all elements in the orbit
of $\lambda$ have this property.)
\end{comment}

%The next lemma is very much inspired by \cite[Theorem 1, Theorem 3]{meyn1990}.

The next count of self-dual irreducible monic \pol s in $\F_{q^2}[x]$  is closely related to the classical count of irreducible monic \pol s or self-reciprocal irreducible monic \pol s in $\F_q[x]$ \cite[Corollary~3.21, Theorem~3.25]{lidlnieder97} \cite[Theorem~3]{meyn1990}.

\begin{lemma} \label{lemma:meyn}
  Let $m \geq 1$ be an odd integer.
  \begin{enumerate}
  \item  The self-dual irreducible monic \pol s in $\F_{q^2}[x]$ with nonzero constant term whose degrees
  divide the odd integer $m \geq 1$ are precisely the irreducible factors of the
  \pol\ $x^{q^m+1}-1 \in \F_{q^2}[x]$. \label{lemma:meyna}
\item $\sum_{d \mid m} d \SDIM -dq = q^m+1$ and
  $m \SDIM -mq = \sum_{d \mid m} \mu(m/d) (q^d+1)$ for any odd integer $m\geq 1$.
  \label{lemma:meynb}
  \end{enumerate}
\end{lemma}
\begin{proof}
  \eqref{lemma:meyna}
  Let $p(x)$ be an irreducible factor of $x^{q^m+1}-1$. Obviously, $p(0) \neq 0$. If $\alpha$ is a root of $p(x)$ in its splitting field
  then $\alpha^{q^m+1}=\alpha$. Therefore $\alpha^{-q} = \alpha^{q^m}$ is also a root of $p(x)$. This shows that $p(x)$ is self-dual (Definition~\ref{defn:action}).

  Next, let $p(x)$ be a self-dual irreducible monic \pol\ with nonzero constant term of degree $d$ dividing $m$.
  According to Proposition~\ref{prop:oddm}, $p(x)$ has a root $\lambda \in \F_{q^{2d}}$ such that $\lambda^{q^{d+1}}-1=0$. Then $p(x)$ divides $x^{q^{d+1}}-1$ which divides $x^{q^{m+1}}-1$ as $d \leq m$ \cite[Lemma~2.12, Corollary~3.7]{lidlnieder97}.

  \noindent \eqref{lemma:meynb} The \pol\ $x^{q^{2m}}-x= x(x^{q^{2m-1}}-1) \in \F_{q^2}[x]$ has no multiple roots according to the standard criterion of \cite[Theorem~1.68]{lidlnieder97}. The \pol\ $x^{q^{m+1}}-1$ is a factor of
  $x^{q^{2m}}-x$ by \cite[Corollary~3.7]{lidlnieder97} and hence also has no multiple roots. From \eqref{lemma:meyna} it now follows that $x^{q^{m+1}}-1$ is the product of all self-dual irreducible \pol s with nonzero constant terms of degrees dividing $m$.  The second assertion is the M{\"o}bius inversion of the first one which is a count of degrees.
 \end{proof}

  \begin{cor}\label{cor:Apmq}
    The arithmetic functions $\IM nq$ and $\SDIM {\pm}nq$ of Definition~\ref{defn:Adq2} satisfy the relations
    \begin{equation*}
    \SDIM -nq =
    \begin{cases}
      q+1 & n=1\\
      \IM nq & \text{$n>1$ odd} \\
      0 & \text{$n>0$ even} 
    \end{cases}
    \qquad
    \SDIM +nq =
    \begin{cases}
      \frac{1}{2}q(q-1)-1 & n=1 \\
      \IM{2n}q & n > 1 
      % \frac{1}{2} A(n)(q^2) & \text{$n>0$ even} 
    \end{cases}
  \end{equation*}
  \end{cor}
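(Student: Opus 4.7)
The idea is to compare the \Mb-inversion formula in Proposition~\ref{prop:A1nq2} to the analogous standard expressions $nA(n)(q)=\sum_{d\mid n}\mu(n/d)(q^d-1)$ and $nA(n)(q^2)=\sum_{d\mid n}\mu(n/d)(q^{2d}-1)$, obtained by taking the logarithmic derivative of the identities \eqref{eq:Adq2}, and then use the definition of $A^+$ together with a parity-based rearrangement of the divisor sum for $A(2n)(q)$.

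I begin with $A^-$. The case $n$ even is immediate from Proposition~\ref{prop:oddm}: a self-dual irreducible monic \pol\ over $\F_{q^2}$ has odd degree, so $A^-(n)(q)=0$. The case $n=1$ records the explicit value $q+1$ from Proposition~\ref{prop:A1nq2}. For odd $n>1$, I would split the formula of Proposition~\ref{prop:A1nq2} as
\begin{equation*}
  nA^-(n)(q)=\sum_{d\mid n}\mu(n/d)q^d+\sum_{d\mid n}\mu(n/d),
\end{equation*}
and observe that the second sum vanishes by the classical identity $\sum_{d\mid n}\mu(n/d)=0$ valid for $n>1$, leaving $nA^-(n)(q)=nA(n)(q)$ as required.

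Turning to $A^+$: the case $n=1$ is a direct computation from the definition, giving $\tfrac12\bigl((q^2-1)-(q+1)\bigr)=\tfrac12(q+1)(q-2)=\tfrac12 q(q-1)-1$. For $n>1$ the claim is equivalent to $2A(2n)(q)=A(n)(q^2)-A^-(n)(q)$. I would write $2nA(2n)(q)=\sum_{e\mid 2n}\mu(2n/e)(q^e-1)$ and split the sum according to the parity of $e$. If $n$ is even, write $n=2^a m$ with $a\geq 1$ and $m$ odd; then every odd divisor $e$ of $2n$ divides $m$, so $2n/e$ is a multiple of $2^{a+1}$, hence not squarefree and $\mu(2n/e)=0$; the remaining divisors $e=2d$ with $d\mid n$ satisfy $\mu(2n/e)=\mu(n/d)$, giving $2nA(2n)(q)=\sum_{d\mid n}\mu(n/d)(q^{2d}-1)=nA(n)(q^2)$, and combined with $A^-(n)(q)=0$ this settles this subcase. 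If $n>1$ is odd, the divisors of $2n$ are exactly $\{d,\,2d:d\mid n\}$ and the identities $\mu(2n/d)=-\mu(n/d)$ and $\mu(2n/(2d))=\mu(n/d)$ give
\begin{equation*}
  2nA(2n)(q)=\sum_{d\mid n}\mu(n/d)(q^{2d}-q^d)=n\bigl(A(n)(q^2)-A(n)(q)\bigr)=n\bigl(A(n)(q^2)-A^-(n)(q)\bigr),
\end{equation*}
using the relation $A^-(n)(q)=A(n)(q)$ proved in the previous paragraph.

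The main obstacle is managing the divisor-sum bookkeeping for $A(2n)(q)$ cleanly in the two parities of $n$; everything reduces to the multiplicativity of $\mu$ together with the elementary values $\mu(2)=-1$ and $\mu(4)=0$, so there is no deeper analytical difficulty once the cases are properly separated.
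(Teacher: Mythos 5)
Your proof is correct and follows essentially the same route as the paper: the odd case of $A^-$ comes from Proposition~\ref{prop:A1nq2} (dropping the $+1$'s via $\sum_{d\mid n}\mu(n/d)=0$), the even case from Proposition~\ref{prop:oddm}, and the $A^+$ identity from splitting the divisor sum for $A(2n)(q)$ by parity using $\mu(2k)=-\mu(k)$ for odd $k$ and $\mu(2k)=0$ for even $k$. The only (harmless) differences are bookkeeping choices, such as writing the counts with $(q^d-1)$ throughout and spelling out the $n=1$ case of $A^+$, which the paper records just after \eqref{eq:Adq2}.
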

  \begin{proof}
  For $n=1$, the $\SDIM -1q =q+1$ self-dual irreducible monic \pol s
  are the \pol s $x-\lambda$ with $\lambda \in \F_{q^2}$ such that
  $\lambda^{q+1}=1$. For odd $n>1$, Lemma~\ref{lemma:meyn}.\eqref{lemma:meynb} shows that 
  $\SDIM -nq = \frac{1}{n} \sum_{d \mid n} \mu(n/d)q^d = \IM nq$, the
  number of irreducible \pol s of degree $n$ over $\F_q$ \cite[Chapter
  2, Corollary]{rosen2002} \cite[Theorem 3.25]{irelandrosen90}.
  % For $n=1$, $A^+(1)(q)=\frac{1}{2}(q+1)(q-2)$.
  % For all $n \geq 0$, $\IM nq = \frac{1}{d} \sum_{d \mid n} \mu(n/d)(q^d-1)$.
  % For odd $n>0$,
  % $A^+(n)(q) = \frac{1}{2} (A(n)(q^2)-\IM nq) = \frac{1}{2n}\sum_{d
  %   \mid n} \mu(n/d)q^d(q^d-1)$.
  When $n>1$ is odd
  \begin{multline*}
    \IM{2n}q = \frac{1}{2n} \sum_{ D \mid 2n} \mu(2n/D)q^D =
    \frac{1}{2n} \sum_{ d \mid n} \mu(n/d)q^{2d} +
    \frac{1}{2n} \sum_{d \mid n} \mu(2n/d)q^d \\ =
    \frac{1}{2n} \sum_{ d \mid n} \mu(n/d)q^{2d} -
    \frac{1}{2n} \sum_{d \mid n} \mu(n/d)q^d =
    \frac{1}{2}(\IM n{q^2} - \IM nq) = \SDIM +nq
  \end{multline*}
  where we use that $\mu(2k)=-\mu(k)$ for odd $k \geq 1$.
  When $n>0$ is even
  \begin{multline*}
    \IM{2n}q  = \frac{1}{2n} \sum_{ D \mid 2n} \mu(2n/D)q^D =
    \frac{1}{2n} \sum_{ d \mid n} \mu(n/d)q^{2d} +
    \frac{1}{2n} \sum_{\substack{d \mid n \\ \text{$d$ odd}}} \mu(2n/d)q^d \\ =
    \frac{1}{2n} \sum_{ d \mid n} \mu(n/d)q^{2d} =
    \frac{1}{2}\IM n{q^2}  = \SDIM +nq 
  \end{multline*}
  where we use that an even divisor of $2n$ has the form $2d$ for a divisor $d$ of $n$,
  an odd divisor of $2n$ is a divisor of $n$, and $\mu(2k)=0$ even $k \geq 2$.
% \mynote{It would maybe be easier to compute $\prod_{\text{$d$ odd}}
%   \Big( \frac{1+x^d}{1-x^d}\Big)^{A(d)(q)}$ is (approximately) the
%   same as in Corollary~\ref{cor:A1q}. It'd follow that $\SDIM -dq$ and
% $A(d)(q)$ are identical for odd $d$.}
%   \mynote{Is there a direct bijection? Has this something to do with
%     the norm?}
% \href{https://math.stackexchange.com/questions/466823/polynomials-over-finite-field-with-irreducible-factors-of-odd-degrees}{Stack
%   exchange} about \pol s that are products of odd degree irreducibles.
\end{proof}

\begin{cor}\label{cor:sumSDIM}
  $\sum_{d \mid n} d\SDIM -dq = q^{n/n_2} +1$ and
  $\sum_{d \mid n} d\SDIM +dq = \frac{1}{2}(q^{2n} - q^{n/n_2})-1$ for any natural number $n \geq 1$. 
\end{cor}
\begin{proof}
  To get the first equation,
  \begin{equation*}
    \sum_{d \mid n} d\SDIM -dq = 2 + \sum_{d \mid n/n_2} d\IM dq = 2+q^{n/n_2}-1 = q^{n/n_2}+1  
  \end{equation*}
  we use Corollary~\ref{cor:Apmq} and \cite[Corollary~3.21]{lidlnieder97}. The second equation,
  \begin{equation*}
    \sum_{d \mid n} d\SDIM +dq =
    \frac{1}{2}(\sum_{d \mid n} d\IM d{q^2} - \sum_{d \mid n} d\SDIM -dq) =
    \frac{1}{2}(q^{2n}-1-q^{n/n_2}-1) = \frac{1}{2}(q^{2n} - q^{n/n_2})-1
  \end{equation*}
  follows because $\SDIM +dq = \frac{1}{2}(\IM d{q^2} - \SDIM -dq)$ (Definition~\ref{defn:Adq2}).
\end{proof}

\section{Equivariant reduced \Euc s of products}
\label{sec:equiv-reduc-euc}

%% Euler characteristic of join

This short section establishes a multiplicative property of
equivariant \Euc s for use in the proof of the crucial
Lemma~\ref{lemma:recur}.

\begin{lemma}\label{lemma:Eucjoin}
  $-\rchi(P_1 \ast \cdots \ast P_t) = \prod_{1 \leq i \leq t}
  -\rchi(P_i)$ for finitely many finite posets $P_1,\ldots,P_t$.
\end{lemma}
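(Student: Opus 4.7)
The plan is to identify $-\rchi(P)$ with the alternating sum over chains that, when including the empty chain with the convention $\dim\emptyset=-1$, makes the join formula multiplicative, and then induct on $t$.

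First I would recall that the reduced Euler characteristic of a finite poset $P$ is that of its order complex $\Delta(P)$ (the simplicial complex whose simplices are the finite chains of $P$), and that this can be written compactly as
\begin{equation*}
  \rchi(P) = \sum_{\sigma} (-1)^{\dim \sigma}
\end{equation*}
where $\sigma$ ranges over all simplices of $\Delta(P)$ \emph{including} the empty one with dimension $-1$; the $(-1)^{-1}=-1$ contributed by $\emptyset$ accounts for the ``reduction''. Next I would note that a chain in the ordinal sum $P_1\ast P_2$ is precisely the disjoint union of a (possibly empty) chain in $P_1$ and a (possibly empty) chain in $P_2$, so $\Delta(P_1\ast P_2)$ is the simplicial join $\Delta(P_1)\ast\Delta(P_2)$, and every simplex $\sigma$ of the join is uniquely $\sigma_1\sqcup\sigma_2$ with $\dim\sigma=\dim\sigma_1+\dim\sigma_2+1$ (again using $\dim\emptyset=-1$ so that the identity holds on the nose for all pairs, including $(\emptyset,\emptyset)$).

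For $t=2$ this immediately gives the computation
\begin{equation*}
  \rchi(P_1\ast P_2)
  =\sum_{\sigma_1,\sigma_2}(-1)^{\dim\sigma_1+\dim\sigma_2+1}
  =-\Bigl(\sum_{\sigma_1}(-1)^{\dim\sigma_1}\Bigr)\Bigl(\sum_{\sigma_2}(-1)^{\dim\sigma_2}\Bigr)
  =-\rchi(P_1)\rchi(P_2),
\end{equation*}
equivalently $-\rchi(P_1\ast P_2)=\bigl(-\rchi(P_1)\bigr)\bigl(-\rchi(P_2)\bigr)$. The general case then follows by a trivial induction on $t$, using associativity $P_1\ast\cdots\ast P_t=(P_1\ast\cdots\ast P_{t-1})\ast P_t$ of the join.

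There is no real obstacle; the only subtlety is the bookkeeping convention that makes the empty simplex a genuine simplex of dimension $-1$, which is exactly what turns $\chi$ into $\rchi$ and makes the product rule work cleanly. If one prefers to avoid the empty-simplex convention, one can equivalently cite the standard homotopy identity $\rchi(X\ast Y)=-\rchi(X)\rchi(Y)$ for simplicial joins (which follows from the Künneth theorem together with $\widetilde{H}_n(X\ast Y)\cong\widetilde{H}_{n-1}(X\wedge Y)$, see e.g.\ the standard references on simplicial topology), and then apply it to order complexes.
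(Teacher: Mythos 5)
Your proof is correct and essentially identical to the paper's: both compute $-\rchi$ by counting simplices of the join with the empty simplex included in degree $-1$, observe that every simplex of $P_1\ast P_2$ is uniquely a join of simplices (possibly empty) of the factors with dimensions adding as $\dim\sigma_1+\dim\sigma_2+1$, multiply the alternating sums, and finish by induction on $t$. The closing remark about deducing the statement instead from the homology of simplicial joins is a fine alternative but not needed.
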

\begin{proof}
  The join $P \ast Q$, of the finite posets $P$ and $Q$, is the poset
  $P \coprod Q$ where all elements of $P$ are $<$ all elements of
  $Q$. The $n$-simplices of the join are $n$-simplices of $P$, $i$
  simplices of $P$ joined to $j$-simplices of $Q$ where $i+j=n-1$, and
  $n$-simplices of $Q$.  Alternatively, when we regard a poset as
  having a single cell $\emptyset$ in degree $-1$, the $n$-simplices
  of the join are all $i$-simplices of $P$ joined to all $j$-simplices
  of $Q$ where $i+j=n-1$. In other words
  $c_n(P \ast Q) = \sum_{i+j=n-1} c_i(P)c_j(Q)$, where $c_n$ stands for
  the number of $n$-simplices. The reduced \Euc\ of the join is
  \begin{multline*}
    -\rchi(P \ast Q) = \sum_{n \geq -1} (-1)^{n-1}c_n(P \ast Q)
                           = \sum_{n \geq -1} \sum_{i+j=n-1}
                           (-1)^ic_i(P)(-1)^jc_j(Q)
                           = \sum_{i \geq -1} c_i(P) \sum_{j \geq -1}
                           c_j(Q) \\
                           = \rchi(P) \rchi(Q)
                           = (-\rchi(P)) (-\rchi(Q))
  \end{multline*}
  Proceeding by induction we get the formula for the reduced \Euc\ of
  finite joins of finite posets.
\end{proof}

For a finite poset $P$ with a least element $\widehat 0$, let
$P^* = P - \{ \widehat 0 \}$ be the induced subposet obtained by removing
$\widehat 0$ from $P$.  Let $G_i$ be finite groups and $P_i$ finite
$G_i$-posets with least elements indexed by the finite set $I$. The
product poset $\prod_{i \in I}P_i$ is a finite
$\prod_{i \in I} G_i$-poset with a least element.

\begin{lemma}\label{lemma:chiproduct}
  The classical and the equivariant \Euc s of the $\prod_{i \in I} G_i$-poset 
  $\big(\prod_{i \in I} P_i \big)^*$ are given by
  \begin{equation*}
    -\rchi\Big( \big(\prod_{i \in I} P_i \big)^*\Big) = \prod_{i \in
      I} -\rchi(P_i^*), \qquad
    -\rchi_r\Big( \big(\prod_{i \in I} P_i \big)^*, \prod_{i \in I}
    G_i \Big) = \prod_{i \in I} -\rchi_r(P_i^*,G_i) 
  \end{equation*}
  where $r \geq 1$.
\end{lemma}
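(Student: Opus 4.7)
The plan is to reduce to a M\"obius-function identity for products of posets with a least element, then upgrade the classical formula to the equivariant one by averaging over homomorphisms from $\Z^r$.

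First I would handle the classical (non-equivariant) formula. Given any finite poset $R$ with least element $\widehat 0_R$, adjoining a formal maximum $\hat 1$ above all of $R$ produces a bounded poset $\widehat R$ in which $R^*$ is the open interval $(\widehat 0_R,\hat 1)$. Philip Hall's interpretation of the M\"obius function as the reduced \Euc\ of an open interval gives $\mu_{\widehat R}(\widehat 0_R,\hat 1) = \rchi(R^*)$, and the defining recursion $\sum_{\widehat 0_R \leq z \leq \hat 1} \mu_{\widehat R}(\widehat 0_R,z) = 0$ then yields
\begin{equation*}
  -\rchi(R^*) = \sum_{x \in R} \mu_R(\widehat 0_R,x).
\end{equation*}

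Next I would specialize this identity to $R = \prod_{i \in I} P_i$, whose least element is $(\widehat 0_i)_{i \in I}$. Multiplicativity of the M\"obius function over Cartesian products \cite[Prop.~3.8.2]{stanley97} gives $\mu_{\prod_i P_i}((\widehat 0_i),(x_i)) = \prod_i \mu_{P_i}(\widehat 0_i,x_i)$, so
\begin{equation*}
  -\rchi\Big(\big(\prod_{i \in I} P_i\big)^*\Big) = \sum_{(x_i)} \prod_i \mu_{P_i}(\widehat 0_i,x_i) = \prod_{i \in I} \sum_{x_i \in P_i} \mu_{P_i}(\widehat 0_i,x_i) = \prod_{i \in I} {-\rchi(P_i^*)},
\end{equation*}
the non-equivariant half of the lemma.

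For the equivariant half, I would use that $\Hom{\Z^r}{\prod_i G_i} = \prod_i \Hom{\Z^r}{G_i}$, so each $X\colon \Z^r \to \prod_i G_i$ decomposes as a tuple $(X_i)$. Since $(\widehat 0_i)_{i \in I}$ is the unique least element of $\prod_i P_i$, it is fixed by any group action, and hence the fixed subposet of $(\prod_i P_i)^*$ under $X(\Z^r)$ equals $\big(\prod_i C_{P_i}(X_i(\Z^r))\big)^*$, a product of posets each with least element $\widehat 0_i$. Applying the classical formula to this fixed subposet and using $|\prod_i G_i| = \prod_i |G_i|$ turns the averaging sum defining $\rchi_r$ into a product of averaging sums, yielding $-\rchi_r((\prod_i P_i)^*, \prod_i G_i) = \prod_i -\rchi_r(P_i^*, G_i)$. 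The only substantive step is the M\"obius identity for a single poset with a least element; the equivariant upgrade is formal once the fixed subposets are identified as products of posets with least elements.
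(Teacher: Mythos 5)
Your proof is correct, but it reaches the classical product formula by a different route than the paper. You deduce $-\rchi(R^*)=\sum_{x\in R}\mu_R(\widehat 0,x)$ from Hall's theorem and then invoke multiplicativity of the \Mb\ function on a product of posets, which gives the identity directly, with no induction and no topological input. The paper instead uses the Quillen--Walker identification of $(P_1\times P_2)_{>(\widehat 0,\widehat 0)}$ with the join $P_1^*\ast P_2^*$, applies the join formula of Lemma~\ref{lemma:Eucjoin}, and inducts on $|I|$; this costs the citation of \cite{quillen78,walker81} but reuses Lemma~\ref{lemma:Eucjoin} (needed anyway for the suspension argument in Lemma~\ref{lemma:recur}) and records the stronger homotopy-theoretic fact that the punctured product has the homotopy type of a join, which is in the spirit of the rest of the paper. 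Your combinatorial argument is more elementary and self-contained, but yields only the numerical equality. For the equivariant half the two arguments essentially coincide: you decompose $\Hom{\Z^r}{\prod_i G_i}=\prod_i\Hom{\Z^r}{G_i}$, observe that the least element is fixed so that $C_{\prod_i P_i}(X(\Z^r))=\prod_i C_{P_i}(X_i(\Z^r))$ (correct, since a point is fixed by the image of $X$ exactly when each coordinate is fixed by each $X_i(z)$), apply the classical formula to these fixed subposets, and let the averaging sum factor; this is exactly the formal upgrade the paper delegates to \cite[Lemma~2.3]{jmm:eulergl+}, and your sign bookkeeping is consistent because the statement is phrased in terms of $-\rchi$.
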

\begin{proof}
  % If $P$ is a finite poset with least element $\widehat 0$, then
  % \cite[Example 2.10]{jmm_mwj:2010}
  % \begin{equation*}
  %   -\rchi(P^*) = \sum_{x \in P} \mu_P(\widehat 0,x)
  % \end{equation*}
  % where $\mu_P$ is the \Mb\ function for $P$ \cite[\S
  % 3.7]{stanley97}. If $P_1$ and $P_2$ are two finite posets with least
  % elements, then
  % \begin{equation*}
  %   -\rchi((P_1 \times P_2)^*) = \sum_{(x_1,x_2) \in P_1 \times P_2}
  %   \mu_{P_1 \times P_2} ((\widehat 0, \widehat
  %   0),(x_1,x_2)) =
  %   \sum_{(x_1,x_2) \in P_1 \times P_2} \mu_{P_1}(\widehat 0,x_1)
  %   \mu_{P_2}(\widehat 0,x_2)  =
  %   (-\rchi(P_1^*))( -\rchi(P_2^*))
  % \end{equation*}
  % by the product formula for the \Mb\ function \cite[Proposition
  % 3.8.2]{stanley97}.
  If $P_1$ and $P_2$ are finite posets with least elements then
  Lemma~\ref{lemma:Eucjoin} implies 
  $-\rchi((P_1 \times P_2)^*) = (-\rchi(P_1^*)) (-\rchi(P_2^*))$
  because
  $(P_1 \times P_2)^* = (P_1 \times P_2)_{>(\widehat{0},\widehat{0})}
  =(P_1)_{>0} \ast (P_2)_{>0} =P_1^* \ast P_2^*$ by \cite[Proposition
  1.9]{quillen78} \cite[Theorem 5.1.(c)]{walker81}.  The general
  formula for the classical \Euc\ follows by induction over the
  cardinality of the index set $I$. Proceed exactly as in \cite[Lemma
  2.3]{jmm:eulergl+} to obtain the formula for the equivariant \Euc s
\end{proof}

\section{Semisimple classes of the general  unitary group}
\label{sec:semi-simple-unitary}

Conjugacy classes in the general linear group $\gl$ or the general
unitary group $\gu$ are classified by functions from the set of
irreducible \pol s in $\F_q[x]$ or $\F_{q^2}[x]$ to the set of
partitions \cite{carter81} \cite[\S2.1, \S2.2]{burkett_nguyen2013}
\cite[Proposition 1A]{fong_srinivasan82}.

An element of $\gu$ is {\em semisimple\/} if it is diagonalisable over
the algebraic closure of $\F_q$
\cite[\S1.4]{carter:finiteLie}. Alternatively, the semisimple elements
of $\GU nq$ are precisely the $q$-regular elements (the elements of
order prime to $q$); see \cite[\S2]{thevenaz92poly}.  A {\em
  semisimple\/} or {\em $q$-regular\/} class in $\gu$ is the conjugacy
class of a semisimple (= $q$-regular) element.
% the word class mover to a better position

\begin{cor}\label{cor:semisimplecharpol}
  $\operatorname{GL}^\pm_n(\F_q)$
  contains exactly $q^n \mp q^{n-1}$ semisimple classes for any
  $n \geq 1$.
  Two semisimple elements of $\operatorname{GL}^{\pm}_ n(\F_q)$ are
  conjugate if and only if their
  characteristic \pol s are identical. 
\end{cor}
\begin{proof}
  The number of semisimple classes is given by a general result of Steinberg \cite[Theorem~3.7.6]{carter:finiteLie}.
  The second statement is an immediate consequence of the
  classification of $q$-regular classes in $\gu$ mentioned above.
  % The
  % number of $q$-regular classes in $\gu$ is the number of degree $n$
  % self-dual monic \pol s in $\F_{q^2}[x]$ which by
  % Corollary~\ref{cor:sdcount} equals $q^n+q^{n-1}$. The number of
  % $q$-regular classes in $\gl$ is $q^n-q^{n-1}$ by \cite[Theorem
  % 2.1]{thevenaz92poly}.
\end{proof}

For a $G$-poset $\Pi$, let $\sim$ be the equivalence relation between $G$-poset endo\m s of $\Pi$ generated by the relation $f_0 \sim f_1$ if $f_0(x) \leq f_1(x)$ for all $x \in \Pi$. We say that $\Pi$ is {\em $G$-poset contractible\/} if there is a $G$-fixed point $x_0$ in $\Pi$ such that $1_\Pi \sim x_0$ where $1_\Pi$ is the identity map of $\Pi$ and $x_0$ is the contant map with value $x_0$ \cite[\S2]{jmm:eulergl+}. If $\Pi$ is $G$-poset contractible then any subposet $C_{\Pi}(X)$ fixed by a subset $X$ of $G$ is poset contractible.

\begin{lemma}\label{lemma:contractability}
  For $n>1$, the poset $C_{\Li_n^-(\F_q)^*}(g)$ is
  $C_{\gu}(g)$-poset contractible unless $g \in \gu$ is semisimple.
\end{lemma}
\begin{proof}
  This is proved in \cite[\S4]{webb87} once we recall Quillen's
  identification \cite{quillen78} of $\Li_n^-(\F_q)^*$ with the Brown
  poset of nontrivial $s$-subgroups of $\gu$ where $s$ is the
  characteristic of $\F_q$.
\end{proof}

The next lemma
%, similar to \cite[Corollary~2.4]{jmm:eulergl+},
facilitates a recursive approach to the equivariant \Euc s
$\rchi_r(\gu)$.
The characteristic \pol\ of any unitary auto\m\ is self-dual by 
Lemma~\ref{lemma:charpolselfdual} 
and thus admits an essentially unique factorisation of the form
$\prod r_i^{m_i^-} \times \prod_j (s_j \bar s_j)^{m_j^+}$ where the
$r_i$ are distinct self-dual irreducible monic \pol s and the $s_j$
are distinct non-self-dual irreducible monic \pol s.
($[\gu]$ denotes the set of conjugacy classes in $\gu$.)
% In the present context of general unitary groups 
% minus signs occur because we are using Lemma~\ref{lemma:chiproduct} to
% compute equivariant reduced \Euc s of product posets whereas there are
% no signs in \cite[Lemma~2.3]{jmm:eulergl+}.

\begin{lemma}\label{lemma:recur}
  For $n > 1$ and $r \geq 1$, the $(r+1)$th equivariant \Euc\ of the $\gu$-poset
  $\Li^-_n(\F_q)^*$ is 
  \begin{equation*}
    \rchi_{r+1}(\GU nq) = \sum_{\substack{[g] \in [\GU nq] \\ \GCD {q,|g|}=1}}
    \rchi_{r}(C_{\Li_n^-(\F_q)^*}(g), C_{\gu}(g))
  \end{equation*}
 where the contribution from the semisimple class $g$ with characteristic
  \pol\ $\prod r_i^{m_i^-} \times \prod_j (s_j \bar s_j)^{m_j^+}$ is given by
  \begin{equation*}
    -\rchi_{r}(C_{\Li_n^-(\F_q)^*}(g), C_{\gu}(g)) =
  %\prod_i -\rchi_r(\GU{m_i^-}{q^{d_i^-}}) \times \prod_j -\rchi_r(\GU{m_j^+}{q^{2d_j^+}}    
    \prod_i -\rchi_r(\GU{m_i^-}{q^{d_i^-}}) \times \prod_j +\rchi_r(\GL{m_j^+}{q^{2d_j^+}})
  \end{equation*}
  for $\deg r_i = d_i^-$, $\deg s_j = d_j^+$ and
  $\sum_i m_i^-d_i^- + \sum_j 2m_j^+d_j^+ =n$. 
\end{lemma}
\begin{proof}
  View the $n$-dimensional unitary geometry $V$ as an
  $\F_{q^2}[x]$-module via the action of $g$.  Since $g$ is semisimple
  the $\F_{q^2}[x]$-module $V$ is
   \begin{equation*}
     V = \bigoplus_{r_i = \bar r_i} \ker(r_i(g)) \oplus
     \bigoplus_{s_j \neq \bar s_j} \ker(s_j(g)) \oplus \ker(\bar
     s_j(g)) 
     = \bigoplus_{r_i = \bar r_i} (\F_{q^2}[x]/(r_i(x)))^{m_i^-}  \oplus \bigoplus_{s_j \neq
       \bar s_j} (\F_{q^2}[x]/(s_j(x))  \oplus \F_{q^2}[x]/(\bar s_j(x)))^{m_j^+} 
   \end{equation*}
   % The involutory auto\m\ $J$ of $V$ on the left hand side may become
   % $J(f)=\bar f$ on the right  hand side.
    % The centraliser of $g$ in
    % the general linear group of $V$ is the $\F_{q^2}[x]$-module auto\m\
    % group
    % \begin{equation*}
    %   \prod \GL {m_i^-}{q^{2d_i^-}} \times \prod 
    %   \GL {m_j^+}{q^{2d_j^+}} \times \GL {m_j^+}{q^{2d_j^+}}      
    % \end{equation*}
    % on the right hand side.
   The direct summands, $\ker r_i(g)$ and
   $\ker(s_j(g)) \oplus \ker(\bar s_j(g))$, in this decomposition of
   $V$ are pairwise orthogonal. For example, let $r_{i_1}$ and
   $r_{i_2}$ be two distinct self-dual irreducible factors of the
   characteristic \pol . For $v_1 \in \ker(r_{i_1}(g))$ and
   $v_2 \in \ker(r_{i_2}(g))$, the inner products
   $\gen{r_{i_2}^{m_{i_2}^-}(g)v_1,v_2}$ and
   $\gen{g^{d_{i_2}^-m_{i_2}^-}v_1, r_{i_2}^{m_{i_2}^-}(g)v_2}=0$
   agree up to a nonzero scalar by \eqref{eq:conjugation}. Since
   $r_{i_2}^{m_{i_2}^-}(g)$ defines an auto\m\ of $\ker(r_{i_1}(g))$,
   this shows that $\ker(r_{i_1}(g)) \perp
   \ker(r_{i_2}(g))$. Similarly,
   $\ker(s_i(g)) \perp (\ker(r_j(g)) \oplus \ker(\bar r_j(g)))$ and
   $\ker(r_{j_1}(g)) \perp (\ker(r_{j_2}(g)) \oplus \ker(\bar
   r_{j_2}(g)))$ for distinct factors $r_{j_1}$ and $r_{j_2}$. Thus
   all summands $\ker r_i(g)$ and
   $\ker(s_j(g)) \oplus \ker(\bar s_j(g))$ are non-degenerate unitary
   geometries.
   
    The centraliser of $g$ in
the general unitary group of $V$ is the group \cite{wall63} \cite[Proposition
1A]{fong_srinivasan82} \cite[Lemma 2.3]{burkett_nguyen2013}
\cite[Lemma 3.3]{tiep_zaless2004}
\begin{equation*}
     C_{\operatorname{GL}^-(V)}(g) =
     \prod_i \GU {m_i^-}{q^{d_i^-}} \times \prod_j
      \GL {m_j^+}{q^{2d_j^+}} 
    \end{equation*}
    of unitary $\F_{q^2}[x]$-auto\m s and the centraliser of $g$ in
    the poset of totally isotropic subspaces of $V$ is the poset
    \begin{equation*}
            C_{L^-(V)}(g) = \prod_i L^-(\ker r_i(g)) \times \prod_j L^-(
            \ker(s_j(g)) \oplus \ker(\bar s_j(g))) 
    \end{equation*}
    of totally isotropic $\F_{q^2}[x]$-subspaces.  The representation
    of $\GU {m_i^-}{q^{d_i^-}}$ in
    $\ker(s_i(g)) \cong \F_{q^{2d_i^-}}^{m_i^-}$ is standard. We now
    turn to the representation of $\GL {m_j^+}{q^{2d_j^+}}$ in
    $\ker(s_j(g)) \oplus \ker(\bar s_j(g)) \cong (\F_{q^{2d_j^+}}
    \oplus \F_{q^{2d_j^+}})^{m_j^+} =\F_{q^{2d_j^+}}^{2m_j^+}$
    described in \cite[\S 1, p 112, 1)]{fong_srinivasan82}.

    The Kleidman--Liebeck Theorem \cite{kleidman_liebeck90}
    \cite[Theorem 3.9]{wilson2009} lists certain natural subgroups of
    the general unitary groups. The unitary $2m$-geometry
    $V_{2m}(\F_{q^2})$ over $\F_{q^2}$ has a basis
    $e_1,\ldots,e_m,f_1,\ldots,f_m$ such that $\gen{e_i,f_i}=1$,
    $1 \leq i \leq m$, are the only nonzero Hermitian inner products
    between the basis vectors \cite[Proposition
    2.3.2]{kleidman_liebeck90}. Write
    $V_{2m}(\F_{q^2})=V_1 \oplus V_2$ as the direct sum of the two
    maximal totally isotropic subspaces $V_1$ and $V_2$ spanned by
    $e_1,\ldots,e_m$, and $f_1,\ldots,f_m$, respectively.  The
    representation of $\GL m{q^2}$ in $\GU {2m}q$ given by
  \begin{equation*}
    \GL m{q^2} \ni A \to
    \begin{pmatrix}
      A & 0 \\ 0 & A^{-1\alpha t}
    \end{pmatrix} \in \GU {2m}q
  \end{equation*}
  stabilises the direct sum
% (stabilize means permute p $100$)
  decomposition $V = V_1 \oplus V_2$ \cite[Lemma 4.1.9, Table 4.2.A,
  Lemma 4.2.3]{kleidman_liebeck90}.  (The matrix $A^{-1\alpha t}$ is
  the conjugate-transpose of the inverse of $A$ so that
  $\gen{Av_1, A^{-1\alpha t}v_2} = \gen{A^{-1}Av_1,v_2} =
  \gen{v_1,v_2}$ for $v_1 \in V_1$, $v_2 \in V_2$.) The stabiliser of
  $g$ in the poset of totally isotropic subspaces of
  $V_{2m}(\F_{q^2})$ is the $\GL mq$-poset of pairs of orthogonal subspaces
  \begin{equation*}
    \Sigma\Li_m^+(\F_{q^2}) = \{ (U_1,U_2) \mid U_1 \leq V_1, U_2 \leq
    V_2, U_1 \perp U_2 \}
  \end{equation*}
  The subposet $\Sigma\Li_m^+(\F_{q^2})^*$, obtained from $\Sigma\Li_m^+(\F_{q^2})$ by
  removing the pair $(0,0)$, is $\GL mq$-homotopy equivalent to the
  suspension \cite[\S 3]{walker88} of $\Li_m^+(\F_{q^2})^*$: Let $\{1,2\}$ be the discrete
  poset of two incomparable points. The two $\GL mq$-poset \m s
  \begin{center}
    \begin{tikzpicture}[>=stealth', baseline=(current bounding box.-2)]
  \matrix (dia) [matrix of math nodes, column sep=25pt, row sep=20pt]{
   \{1,2\} \ast \Li_m^+(\F_{q^2})^* &  \Sigma\Li_m^+(\F_{q^2})^*\\};
  \draw[->] ($(dia-1-1.east)+(0,0.1)$) -- ($(dia-1-2.west)+(0,0.1)$) 
  node[pos=.5,above] {$f$}; 
  \draw[->] ($(dia-1-2.west)-(0,0.1)$) -- ($(dia-1-1.east)-(0,0.1)$)
   node[pos=.5,below] {$g$} ; 
 \end{tikzpicture}
\end{center}
given by $f(1,U)=(U,0)$, $f(2,U)=(0,U)$, and
\begin{equation*}
  g(U_1,U_2) =
  \begin{cases}
    (1,U_1) & U_1 \neq 0 \\ (2,U_2) & U_1 = 0
  \end{cases} \end{equation*} are homotopy equivalences as $gf$ is the
identity of the suspension of $\Li_m^+(\F_{q^2})^*$ and $fg$ is
homotopic to the identity of $\Sigma\Li_m^+(\F_{q^2})^*$ as
$fg(U_1,U_2) \leq (U_1,U_2)$.
By the product formula in Lemma~\ref{lemma:Eucjoin},
\begin{equation*}
  -\rchi_r(\Sigma\Li_m^+(\F_{q^2})^*,\GL m{q^2}) =
  - \rchi_r(\{1,2\} \ast \Li_m^+(\F_{q^2})^*,\GL m{q^2}) =
   \rchi_r(\Li_m^+(\F_{q^2})^*,\GL m{q^2})  
\end{equation*}
and the formula of the lemma is a consequence of the product formula
in Lemma~\ref{lemma:chiproduct}.
\end{proof}

Observe that the contribution of a $q$-regular class depends only on
the multiplicities and degrees of the irreducible factors of its
characteristic \pol .

\section{Proofs of Theorem~\ref{thm:main} and Corollary~\ref{cor:expform}}
\label{sec:computation}

We use Lemma~\ref{lemma:recur} in an inductive computation of the
generating functions \eqref{eq:genfct}. The next proposition gives the start of the induction.

\begin{prop}\label{prop:neq1}
  Suppose that $r=1$ or $n=1$.
  \begin{enumerate}
  \item  When $r=1$, $-\rchi_1(\GU nq)=\delta_{1,n}$ is $1$ for $n=1$
    and $0$ for all $n>1$. \label{prop:neq11}
  \item When $n=1$, $-\rchi_r(\GU 1q) =(q+1)^{r-1}$ for all $r \geq 1$.
   \label{prop:req1}
  \end{enumerate}
\end{prop}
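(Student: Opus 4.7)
The plan is to apply Definition~\ref{defn:chir} directly, exploiting two elementary facts: the poset $\Li_1^-(\F_q)^*$ is empty, and for $r=1$ the equivariant reduced \Euc\ coincides with the reduced \Euc\ of the orbit space.

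For part \ref{prop:req1}, I first observe that in the $1$-dimensional unitary geometry $V_1(\F_{q^2})$ the Hermitian form evaluates to $\gen{u,u}=cu^{q+1}$, which is nonzero for every $u\neq 0$. Hence $\Li_1^-(\F_q)^*$ is empty and every subposet $C_{\Li_1^-(\F_q)^*}(X(\Z^r))$ is empty with reduced \Euc\ equal to $-1$. Since the short exact sequence \eqref{eq:ses} exhibits $\GU 1q$ as the cyclic group $C_{q+1}$, the set $\Hom{\Z^r}{\GU 1q}$ has $(q+1)^r$ elements, and Definition~\ref{defn:chir} yields $\rchi_r(\GU 1q)=-(q+1)^{r-1}$ at once.

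For part \ref{prop:neq11}, the plan is to rewrite $\rchi_1$ as the reduced \Euc\ of the orbit space. Burnside's formula $\chi(|P|/G)=\frac{1}{|G|}\sum_g\chi(|P|^g)$ for finite group actions on finite $CW$-complexes translates, upon subtracting $1$ from both sides, into $\rchi_1(\gu)=\rchi(\Li_n^-(\F_q)^*/\gu)$. For $n=1$ the orbit space is empty and so has reduced \Euc\ $-1$, giving $-\rchi_1(\GU 1q)=1=\delta_{1,1}$. For $n\geq 2$, Witt's extension theorem shows that $\gu$ acts transitively on flags of each fixed type, so the orbit space is canonically the standard $(\lfloor n/2\rfloor-1)$-simplex on the vertex set $\{1,2,\ldots,\lfloor n/2\rfloor\}$. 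Being a nonempty contractible complex, its reduced \Euc\ vanishes and $-\rchi_1(\GU nq)=0=\delta_{1,n}$.

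The only step beyond mechanical computation is the identification of the orbit space as a simplex for $n\geq 2$; this follows from the strong transitivity of $\gu$ on its spherical building, equivalently from Witt's extension theorem, and is the sole point where geometric input enters the argument.
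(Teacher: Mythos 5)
Your proof is correct, and for part (1) with $n>1$ it takes a genuinely different route from the paper. Both arguments begin the same way: your Burnside-formula computation $\rchi_1(\gu)=\rchi(\Li_n^-(\F_q)^*/\gu)$ is exactly the identification the paper imports from \cite[Proposition 2.13]{jmm:eulercent} (and your part (2) is the paper's argument verbatim: $\Li_1^-(\F_q)^*=\emptyset$, $\rchi(\emptyset)=-1$, and $|\Hom{\Z^r}{\GU 1q}|=(q+1)^r$ since $\GU 1q\cong C_{q+1}$ is abelian). The divergence is in how the orbit space is shown to have vanishing reduced \Euc\ for $n>1$: the paper passes through Quillen's $\gu$-homotopy equivalence between $\Li_n^-(\F_q)^*$ and the Brown poset of nontrivial $p$-subgroups and then quotes Webb's theorem on orbit spaces of $p$-subgroup posets, whereas you argue directly on the building, using Witt's extension theorem to get transitivity of $\gu$ on totally isotropic flags of each type, so that the orbit space is the closed fundamental chamber, a $(\floor{n/2}-1)$-simplex, which is nonempty and contractible. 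Your route is more elementary and self-contained (no Quillen or Webb needed), at the cost of the geometric input from Witt's theorem and the standard admissibility remark needed to know that the quotient of the order complex is a CW complex whose cells are the simplex orbits and that fixed points of the realization are realizations of fixed subposets; the paper's route is shorter to state and generalizes immediately to any finite group in place of $\gu$, since Webb's theorem makes no use of the building structure.
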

\begin{proof}
    When $n=1$,
  $\Li_1^-(\F_q)^*=\emptyset$ is empty. Since $\rchi(\emptyset)=-1$, the
  $r$th equivariant \Euc\ is
  \begin{equation*}
    -\rchi_r(\GU 1q) = |\Hom{\Z^r}{\GU 1q}|/|\GU 1q| = |\GU 1q|^{r-1} =
    (q+1)^{r-1}  
  \end{equation*}
  for all $r \geq 1$.
  
  The first equivariant reduced \Euc\ of $\rchi_1(\gu)$, where $n>1$,
  is the classical \Euc\ of the orbit space
  $\operatorname{B}\!\Li_n^-(\F_q)^*/\gu$ for the $\gu$-action on the
  building, the classifying space of the poset $\Li_n^-(\F_q)^*$
  \cite[Proposition 2.13]{jmm:eulercent}. According to Quillen we can
  replace $\Li_n^-(\F_q)^*$ by the Brown poset $\cat S{\gu}{s+*}$ of
  nontrivial $s$-subgroups of $\gu$ \cite[Theorem 3.1]{quillen78}.  Webb's theorem
  \cite[Proposition~8.2.(i)]{webb87} applies to this replacement
  showing
  $\rchi_1(\GU nq) =
  \rchi\big(\operatorname{B}\!\Li_n^-(\F_q)^*/\gu\big) = 0$.
\end{proof}

% Let $a=(1,0,0,\ldots)$, $T^1(a)=a$, $T^2(a) = T^-(T^1a)$, $T^3a=T^+T^2a$, \ldots or
% $F_1(x)=1+x$, $F_2(x) = T^-F_1(x)$, $F_3(x)=T^+F_2(x)$, \ldots

Lemma~\ref{lemma:recur} can be  reformulated succinctly as the recurrence 
\begin{equation}
  \label{eq:FGL-recur}
  \FGL -{r+1}(q,x)  = T_{\SDIM -{}q}(\FGL -r(q,x)) T_{\SDIM +{}q}(\FGL +r(q^2,x^2)) 
\end{equation}
using the power series transform from
\cite[Definition~3.1]{jmm:eulergl+} reviewed in
Section~\ref{sec:mult-t_a-transf} below.

\begin{cor}\label{cor:A1q}
  The following identities hold
  \begin{gather*}
    T_{\SDIM -{}q}(1-x) T_{\SDIM +{}q}(1-x^2) = \frac{1-qx}{1+x}
    \qquad
    T_{\SDIM -{}q}(1+x) T_{\SDIM +{}q}(1-x^2) = \frac{1+qx}{1-x}
    \\
    T_{\SDIM -{}q} \Big( \frac{1+x}{1-x} \Big) = \frac{(1+x)(1+qx)}{(1-x)(1-qx)}   
  \end{gather*}
\end{cor}
\begin{proof}
  For the first identity, note that
  \begin{equation*}
    T_{\SDIM -{}q}(1-x)^{-1} T_{\SDIM +{}q}(1-x^2)^{-1} =
    1+\sum_{n \geq 1} (q^n+q^{n-1})x^n =
    1+\sum_{n \geq 1} (qx)^n +x\sum_{n \geq 1} (qx)^{n-1} = \frac{1+x}{1-qx} 
  \end{equation*}
  since the coefficient of $x^n$ in this power series is the number of
  self-dual monic \pol s in $\F_{q^2}[x]$ determined in
  Corollary~\ref{cor:sdcount}. (An alternative proof,
  \begin{multline*}
    T_{\SDIM -{}q}(1-x) T_{\SDIM +{}q}(1-x^2) =
    \prod_{d \geq 1} (1-x^d)^{\SDIM -dq} \prod_{d \geq 1}
    (1-x^{2d})^{\SDIM +dq} \\ =
    (1-x)^2
    \prod_{\substack{d \geq 1\\ \text{$d$ odd}}} (1-x^d)^{\IM  dq}
     (1-x^2)^{-1} \prod_{\substack{d \geq 2\\ \text{$d$ even}}}
     (1-x^d)^{\IM  dq} =
     \frac{1-x}{1+x} \prod_{d \geq 1} (1-x^d)^{\IM  dq}
     \stackrel{\text{\eqref{eq:Adq2}}}{=}
     \frac{1-qx}{1+x}
  \end{multline*}
  follows from Corollary~\ref{cor:Apmq}.)  Since $\SDIM -dq$ is
  nonzero only for odd $d$ (Proposition~\ref{prop:oddm}),
\begin{equation*}
  T_{\SDIM -{}q}(1+x) = \prod_{d \geq 1} (1+x^d)^{\SDIM -dq}
  = \prod_{d \geq 1} (1-(-x)^d)^{\SDIM -dq}
\end{equation*}
is the $\SDIM -{}q$-transform of $1-x$ evaluated at $-x$. Obviously, the
$\SDIM +{}q$-transform of $1-x^2$ is an even function of $x$. Thus
$T_{\SDIM -{}q}(1+x) T_{\SDIM +{}q}(1-x^2)$ is $T_{\SDIM -{}q}(1-x)
T_{\SDIM +{}q}(1-x^2)$ evaluated at $-x$. This proves the second
identity. The third identity is simply the quotient of the first two.
\end{proof}

 \begin{proof}[Proof of Theorem~\ref{thm:main}]
  The first generating function \eqref{eq:FGLpm}  is
  $\operatorname{FGL}^-_1(q,x)=1+x$ by
  Proposition~\ref{prop:neq1}.\eqref{prop:neq11}.
  Assume the formula of Theorem~\ref{thm:main} holds for some $r \geq 1$.
  Using a consequence of Corollary~\ref{cor:A1q},
  \begin{equation*}
    T_{\SDIM -{}q}(1 \pm q^j x) T_{\SDIM +{}q}(1-q^{2j}x^2) = \frac{1 \pm q^{j+1}x}{1 \mp q^jx}
  \end{equation*}
  which follows from the multiplicative property of these power series transforms \cite[(3.2)]{jmm:eulergl+},
  and recursion~\eqref{eq:FGL-recur}, the computation
  \begin{align*}
    \FGL -{r+1}(q,x) &=
    T_{\SDIM -{}q}(\FGL -r(q,x)) T_{\SDIM +{}q}(\FGL +r(q^2,x^2)) \\& =
    \frac{T_{\SDIM -{}q}(\prod_{j \equiv r \bmod 2}(1+q^jx)^{\binom rj})}
    {T_{\SDIM -{}q}(\prod_{j \not\equiv r \bmod 2}(1-q^jx)^{\binom rj})}
    \frac{T_{\SDIM +{}q}(\prod_{j \equiv r \bmod 2}(1-q^{2j}x^2)^{\binom rj})}
    {T_{\SDIM +{}q}(\prod_{j \not\equiv r \bmod 2}(1-q^{2j}x^2)^{\binom rj})} \\ &=
    \frac{\prod_{j \equiv r \bmod 2}(1+q^{j+1}x)^{\binom rj}}{\prod_{j \equiv r \bmod 2}(1-q^{j}x)^{\binom rj}}
    \frac{\prod_{j \not\equiv r \bmod 2}(1+q^{j}x)^{\binom rj}}{\prod_{j \not\equiv r \bmod 2}(1-q^{j+1}x)^{\binom rj}} \\ &=
    \frac{\prod_{j \equiv r+1 \bmod 2}(1+q^{j}x)^{\binom r{j-1}}}{\prod_{j \not\equiv r+1 \bmod 2}(1-q^{j}x)^{\binom rj}}
    \frac{\prod_{j \equiv r+1 \bmod 2}(1+q^{j}x)^{\binom rj}}{\prod_{j \not\equiv r+1 \bmod 2}(1-q^{j}x)^{\binom r{j-1}}} = 
    \frac{\prod_{j \equiv r+1 \bmod 2}(1+q^{j}x)^{\binom {r+1}j}}{\prod_{j \not\equiv r+1 \bmod 2}(1-q^{j}x)^{\binom {r+1}j}}
  \end{align*}
  shows that the formula holds also for $r+1$.
 \end{proof}

 \begin{proof}[Proof of Corollary~\ref{cor:expform}]
   The logarithm of the $(r+1)$th generating function $\FGL
   -{r+1}(q,x)$ is
\begin{multline*}
    \log \FGL -{r+1}(q,x) =
    \sum_{0 \leq j \leq r} (-1)^{r-j} \binom{r}{j} \log(1+(-1)^{r-j}q^jx)
    =
    \sum_{0 \leq j \leq r} (-1)^{r-j} \binom{r}{j} \sum_{n \geq 1}
    (-1)^{n+1}(-1)^{n(r-j)}q^{nj} \frac{x^n}{n} \\=
    \sum_{n \geq 1} (-1)^{n+1}  \sum_{0 \leq j \leq r}
    \binom{r}{j}(-1)^{(n+1)(r-j)} q^{nj} \frac{x^n}{n} =
    \sum_{n \geq 1} (-1)^{n+1} (q^n+(-1)^{n+1})^r \frac{x^n}{n}
    =
    -\sum_{n \geq 1} (-1)^{n} (q^n-(-1)^{n})^r \frac{x^n}{n}
  \end{multline*}   
 \end{proof}

  \begin{comment}
    \mynote{Delete?}  
The recurrence
  \begin{equation}
    %% implemented as a magma function see
   %% /Users/jespermoller/projects/euler/magma/linposet/weakcomp.prg or concl.gu.prg
    \label{eq:recurform}
    -\rchi_{r+1}(\gu) =
    \begin{cases}
      1 & n=0 \\
      \displaystyle -\frac{1}{n} \sum_{1 \leq j \leq n}
      (-1)^j (q^j-(-1)^j)^r \rchi_{r+1}(\GU{n-j}q) & n>0
          \end{cases}
  \end{equation}
  follows from \cite[Lemma~3.7]{jmm:eulergl+} applied to Corollary~\ref{cor:expform}.
\end{comment}

\begin{comment}
%% I later discovered that this is already in the file!!!! 
This implies that
\begin{equation*}
  %\label{eq:GGLpm}
  \GGL -n(q,x) = \GGL +n(-q,(-1)^nx)
\end{equation*}
where for $n \geq 1$
\begin{equation}
  \label{defn:GGLpm}
  \GGL {\pm}n(q,x) = \sum_{r \geq 0} (\pm 1)^{n+1}
  \rchi_{r+1}(\operatorname{GL}^{\pm}_{n}(\F_{q}))x^r
\end{equation}
are the power series obtained by expansion after the parameter $r$
(rather than the parameter $n$ as in the power series $\FGL {\pm}{r+1}(q,x)$).
%% GGU in concl.gu.prg
Proposition 4.7 of \cite{jmm:eulergl+} contains an expression for
$\GGL +n(q,x)$ in terms of partitions of $n$.
\end{comment}

\begin{rmk}[The Kn\"orr-Robinson conjecture]\label{rmk:KRC}
  The (non-block-wise form of the)
    the Kn\"orr-Robinson conjecture for the general unitary group $\GU nq$
    relative to the characteristic $s$ of $\F_q$ 
asserts that \cite{knorr_robinson:89,
  thevenaz93Alperin} \cite[Theorem 3.1]{quillen78}
\begin{equation*}
 -\rchi_2(\GU nq) = z_s(\GU nq)
\end{equation*}
where 
$z_s(\GU nq) = | \{ \chi \in \Irr{\C}{\GU nq} \mid |\GU nq|_s
\mid \chi(1)\}| $ is the number of irreducible complex representations
of $\GU nq$ of $s$-defect $0$ \cite[p 134]{isaacs}. As
$\FGL -2(q,x)=\frac{1+qx}{1-x}$, the left side is $q+1$ and so
is the right side \cite[Remark p 69]{humphreys06}. This confirms the
Kn\"orr--Robinson conjecture for $\GU nq$ relative to the defining
characteristic.
\end{rmk}

\subsection{Alternative presentations of the equivariant reduced \Euc s}
\label{sec:power-seri-expans-1}
The binomial formula applied to the right hand side of
Theorem~\ref{thm:main} gives the more direct expression
\begin{equation}\label{eq:prodform}
  %% /Users/jespermoller/projects/euler/magma/linposet/weakcom.prg function rchigu
 -\rchi_{r+1}(\gu) = \sum_{n_0+\cdots+n_r=n} \prod_{0 \leq j \leq r}
    (-1)^{jn_j} \binom{(-1)^{j}\binom{r}{j}}{n_j}q^{n_j(r-j)}
  \end{equation}
 \begin{comment}
    \begin{equation}\label{eq:prodform}
  %% /Users/jespermoller/projects/euler/magma/linposet/weakcom.prg 
    \rchi_{r+1}(\gl) = \sum_{n_0+\cdots+n_r=n} \prod_{0 \leq j \leq r}
    (-1)^{n_j} \binom{(-1)^{j}\binom{r}{j}}{n_j}q^{n_j(r-j)}
  \end{equation}
  \end{comment}
  where the sum ranges over all $\binom{n+r}{n}$ weak compositions of
  $n$ into $r+1$ parts \cite[p 15]{stanley97}. This is also a consequence of \cite[Corollary~3.10]{jmm:eulergl+} and
\lq Ennola duality\rq ,
  \begin{equation}
  \label{eq:FGLpm}
  \FGL -{r}(q,x) =
  \FGL +{r}(-q,(-1)^{r}x), \qquad r \geq 1
\end{equation}
which follows by comparing the expressions of \cite[Theorem~1.4]{jmm:eulergl+} and Theorem~\ref{thm:main}.
%%% .../magma/linposet/conclusion.prg and concl.gu.prg  function polrchigu(r,n) 

We shall next relate
the equivariant \Euc s more directly to the structure of the general linear and unitary groups.
Recall that a (finite) multiset $\lambda$ is a (finite) base set $B(\lambda)$ with a
  multiplicity function assigning a natural number $E(\lambda,b)$  to all
  $b \in B(\lambda)$. Representing the multiset as
  $\lambda=\{b^{E(\lambda,b)} \mid b \in B(\lambda) \}$ and assuming the base  $B(\lambda)$ consists of natural numbers, we let
\begin{alignat*}{4}
  &|\lambda| = \sum_{b \in B(\lambda)} E(\lambda,b)&&\qquad 
  &&n(\lambda)=\sum_{b \in B(\lambda)} bE(\lambda,b) \\
  &T(\lambda) = \frac{n(\lambda)!}{\prod_{b \in B(\lambda)} E(\lambda,b)!
    b^{E(\lambda,b)}} &&\qquad
  &&U(\lambda,q) %= \prod_{b \in B(\lambda)} (q^b-1)^{E(\lambda,b)} =
  = \prod_{b \in B(\lambda)} (q^b-1)^{E(\lambda,b)}  % = \prod_{b \in \lambda} (q^b-1) \in \Z[q]
\end{alignat*}
so that $|\lambda|$ is the cardinality or number of parts of $\lambda$, $\lambda$
partitions $n$, $\lambda \vdash n$, if $n(\lambda)=n$,
$T(\lambda)$ is the number of elements in the symmetric group
$\Sigma_{n(\lambda)}$ of cycle type $\lambda$ \cite[Proposition
1.1.1]{sagan:symmetric}, and $U(\lambda,q)$ is an integral \pol\ in $q$. With this notation,
the coefficients of $x^n$ in the power series of \cite[Corollary~1.5]{jmm:eulergl+} and
Corollary~\ref{cor:expform} are
%\cite[Proposition~3.13]{jmm:eulergl+}
\begin{equation}\label{eq:rchigl+}
  \rchi_{r+1}(\GL nq) =
  \frac{1}{n!} \sum_{ \lambda \vdash n} (-1)^{|\lambda|} T(\lambda) U(\lambda,q)^r, \quad
  -\rchi_{r+1}(\GU nq) = (-1)^{n(r+1)}
  \frac{1}{n!} \sum_{ \lambda \vdash n} (-1)^{|\lambda|} T(\lambda) U(\lambda,-q)^r
\end{equation}
with summation over all partitions $\lambda$ of $n$. 
%% some info is here .../magma/linposet/conclusion.prg
%% see also linposet03.pdf

Let $F_q$ denote the standard Frobenius endo\m\ of the algebraic group $\operatorname{GL}_n(\overline \F_s)$, $s=\operatorname{char}(\F_q)$, with fixed points $\operatorname{GL}_n(\overline \F_s)^{F_q} = \GL nq$.  The standard maximal torus $T_n(\overline{\F}_s) \cong \overline{\F}_s^\times \times \cdots \times \overline{\F}_s^\times $ consisting of the diagonal matrices in $\operatorname{GL}_n(\overline \F_s)$ is maximally split with respect to $F_q$ \cite[Definition~21.13, Example~21.14]{malle-testerman2011}. The Weyl group $W_n$ of $T_n(\overline{\F}_s)$ acts as the standard permutation representation of the symmetric group $\Sigma_n$ in the $n$-dimensional real vector space $X(T_n(\overline{\F}_s)) \otimes \R$ spanned by the character group $X(T_n(\overline{\F}_s))$.  As usual, $T_n(\overline{\F}_s)_w$ denotes the $F_q$-stable maximal torus of $\operatorname{GL}_n(\overline \F_s)$ corresponding to the Weyl group element $w \in W_n$ \cite[Proposition~25.1]{malle-testerman2011}.

Let $\sigma$ be the graph auto\m\ of $\operatorname{GL}_n(\overline \F_s)$ given by $\sigma(M) = A^{-1}(M^t)^{-1}A$,
$ M \in \operatorname{GL}_n(\overline \F_s)$,
where $A$ is the involutory permutation $A(i)=n+1-i$, $1 \leq i \leq n$.  The fixed points for the Steinberg endo\m\ $F_q\sigma$ are $\operatorname{GL}_n(\overline \F_s)^{F_q\sigma} = \GU nq$, $T_n(\overline{\F}_s)$ is a maximally split torus also with respect to $F_q\sigma$, and $\sigma$ acts on $X(T_n(\overline{\F}_s)) \otimes \R$ as $-A$ \cite[Examples~21.14.(2), 22.11.(2)]{malle-testerman2011}.

\begin{prop}\label{prop:altglpm}  % /Users/jespermoller/projects/euler/magma/suposet/alternative.prg
  The equivariant \Euc s of the $\operatorname{GL}_n^\pm(\F_q)$-posets $\Li_n^\pm(\F_q)^*$, $n \geq 1$, are
  \begin{align*}
    &\rchi_{r+1}( \GL nq) =\frac{(-1)^n}{|W_n|} \sum_{w \in W_n} \det(w) |T_n(\overline{\F}_s)_w^{F_q}|^r
      = \frac{(-1)^n}{|W_n|} \sum_{w \in W_n} \det(w) \det(q-w)^r \\
    -&\rchi_{r+1}(\GU nq) =
   \frac{(-1)^{\binom n2}}{|W_n|} \sum_{w \in W_n} \det(w) |T_n(\overline{\F}_s)_w^{F_q\sigma}|^r =
      \frac{1}{|W_n|} \sum_{w \in W_n} \det(w) \det(q + w)^r 
  \end{align*}
\end{prop}
\begin{proof}
  The number of elements of $T_n(\overline{\F}_s)_w$ that are fixed by the Frobenius endo\m\ $F_q$ is
\begin{equation*}
  |T_n(\overline{\F}_s)_w^{F_q}| = U(\lambda(w),q) = \det (q-w^{-1})
\end{equation*}
where $\lambda(w)$ is the cycle type of the permutation $w$ and determinants are computed in the real vector space $X(T_n(\overline{\F}_s)) \otimes \R$ \cite[Proposition~25.3, Example~25.4]{malle-testerman2011}. Equation~\eqref{eq:rchigl+} now takes the form
\begin{multline*}
  \rchi_{r+1}(\GL nq) 
  =\frac{(-1)^n}{|W_n|} \sum_{w \in W_n} \det(w) U(\lambda(w),q)^r
  \\ =\frac{(-1)^n}{|W_n|} \sum_{w \in W_n} \det(w) |T_n(\overline{\F}_s)_w^{F_q}|^r
  = \frac{(-1)^n}{|W_n|} \sum_{w \in W_n} \det(w) \det(q-w^{-1})^r
  = \frac{(-1)^n}{|W_n|} \sum_{w \in W_n} \det(w) \det(q-w)^r
\end{multline*}
since $(-1)^{\lambda(w)} = (-1)^n \det(w)$ and $\det(w)=\det(w^{-1})$ for all $w \in W_n$.

The number of elements of $T_n(\overline{\F}_s)_w$ that are fixed by Steinberg endo\m\ $F_q\sigma$ is \cite[Proposition~25.3.(c)]{malle-testerman2011}
\begin{equation*}
  |T_n(\overline{\F}_s)_w^{F_q\sigma}|  = \det (q-(-wA)^{-1}) = (-1)^n \det (-q-(wA)^{-1}) = (-1)^n U(\lambda(wA),-q)
\end{equation*}
Using Ennola duality \eqref{eq:FGLpm} combined with \eqref{eq:rchigl+}, and \cite[Proposition~25.3]{malle-testerman2011}, the calculation
\begin{multline*}
  -\rchi_{r+1}(\GU nq) =
   (-1)^{n(r+1)}\frac{(-1)^{n}}{|W_n|} \sum_{w \in W_n} \det(w) U(\lambda(w),-q)^r =
   \frac{(-1)^{nr}}{|W_n|} \sum_{w \in W_n} \det(w) U(\lambda(w),-q)^r \\=
   \frac{1}{|W_n|} \sum_{w \in W_n} \det(w) ((-1)^nU(\lambda(w),-q))^r =
 \frac{1}{|W_n|} \sum_{w \in W_n} \det(wA) ((-1)^nU(\lambda(wA),-q))^r \\ =
   \frac{1}{|W_n|} \sum_{w \in W_n} \det(wA) |T_n(\overline{\F}_s)_w^{F_q\sigma}|^r =
   \frac{1}{|W_n|} \sum_{w \in W_n} \det(wA) \det(q-(-wA)^{-1})^r \\=
   \frac{1}{|W_n|} \sum_{w \in W_n} \det(wA) \det(q + (wA)^{-1})^r =
   \frac{1}{|W_n|} \sum_{w \in W_n} \det(w) \det(q + w^{-1})^r =
   \frac{1}{|W_n|} \sum_{w \in W_n} \det(w) \det(q + w)^r 
\end{multline*}
finishes the proof. Here, $\det(wA) = \det(A) \det(w)$ where
$\det(A) = (-1)^{\binom n2}$ is $+1$ for $n \equiv 0,1 \bmod 4$ and $-1$ for $n \equiv 2,3 \bmod 4$. 
% https://math.stackexchange.com/questions/415970/how-to-determine-the-parity-of-a-permutation-by-its-cycle-decomposition
\end{proof}

Let $\rchi_{r+1}(\operatorname{GL}_n^\pm(\F_q))^{-1}$ denote the coefficient of $x^n$ in the reciprocal power series $\FGL{\pm}{r+1}(q,x)^{-1}$. The proof of the next result is similar to that of Proposition~\ref{prop:altglpm} except that it is based on the identities %\cite[(3.17)]{jmm:eulergl+}
\begin{equation}
  \label{eq:rchigl+inv}
  \rchi_{r+1}(\GL nq)^{-1} =
  \frac{1}{n!} \sum_{ \lambda \vdash n}  T(\lambda) U(\lambda,q)^r, \qquad
  \rchi_{r+1}(\GU nq)^{-1} =
  \frac{(-1)^n}{n!} \sum_{ \lambda \vdash n}  T(\lambda) ((-1)^nU(\lambda,-q))^r,
\end{equation}
rather than \eqref{eq:rchigl+}.  The right hand sides of these identities are the
coefficients of $x^n$ in the reciprocal of the power series of
Corollary~\ref{cor:expform} and \cite[Corollary~1.5]{jmm:eulergl+}.

\begin{prop}\label{prop:altglpminv}  % /Users/jespermoller/projects/euler/magma/suposet/alternative.prg
  The reciprocal equivariant \Euc s of the $\operatorname{GL}_n^\pm(\F_q)$-posets $\Li_n^\pm(\F_q)^*$, $n \geq 1$,
  are
  \begin{align*}
    &\rchi_{r+1}( \GL nq)^{-1} =\frac{1}{|W_n|} \sum_{w \in W_n} |T_n(\overline{\F}_s)_w^{F_q}|^r
      = \frac{1}{|W_n|} \sum_{w \in W_n}  \det(q-w)^r \\
    (-1)^n&\rchi_{r+1}( \GU nq)^{-1} =
   \frac{1}{|W_n|} \sum_{w \in W_n}  |T_n(\overline{\F}_s)_w^{F_q\sigma}|^r =
      \frac{1}{|W_n|} \sum_{w \in W_n} \det(q + w)^r 
  \end{align*}
\end{prop}

Again, the case $r=1$ has special significance in that
$\rchi_2(\GL nq)^{-1}$ is the number of semisimple classes in $\GL nq$ and
$(-1)^n\rchi_2(\GU nq)^{-1}$ the number of semisimple classes in $\GU nq$  \cite[Proposition~3.7.4]{carter:finiteLie}.

\begin{exmp}\label{exmp:polid}
  The \pol\ identities 
\begin{alignat*}{3}
  &\frac{1}{|W_n|} \sum_{w \in W_n} \det(1-qw) = 1-q &&\qquad
  &&\frac{1}{|W_n|} \sum_{w \in W_n} \det(w) \det(q-w)^2 = (-1)^{n+1}n(q-1)^2q^{n-1} \\  
  &\frac{1}{|W_n|} \sum_{w \in W_n} \det(1+qw) = 1+q &&\qquad
 &&\frac{1}{|W_n|} \sum_{w \in W_n} \det(w) \det(q+w)^2 = n(q+1)^2q^{n-1} 
\end{alignat*}
are the instances $r=1,2$ of Proposition~\ref{prop:altglpm}. The right hand sides of the equations in the left column, where $r=1$, are the negative of the number of irreducible complex representations of $s$-defect $0$. (Indeed,
$|W_n|(1+q) = -|W_n|\rchi_2(\GU nq) = \sum_{w \in W_n} \det(w) \det(q+w) = \sum_{w \in W_n} \det(w^{-1}) \det(q+w^{-1}) = \sum_{w \in W_n} \det(w) \det(q+w^{-1}) = \sum_{w \in W_n} \det(1+qw)$.)

  The \pol\ identities
\begin{alignat*}{3}
  &\frac{1}{|W_n|} \sum_{w \in W_n} \det(q-w) = q^n-q^{n-1} &&\qquad
  && \frac{1}{|W_n|} \sum_{w \in W_n} \det(q-w)^2 = \frac{q-1}{q+1}(q^{2n}-1) \\
  &\frac{1}{|W_n|} \sum_{w \in W_n} \det(q+w) = q^n+q^{n-1} &&\qquad
  && \frac{1}{|W_n|} \sum_{w \in W_n} \det(q+w)^2 = \frac{q+1}{q-1}(q^{2n}-1)
\end{alignat*}
are the instances $r=1,2$ of Proposition~\ref{prop:altglpminv}.  The right hand sides of the equations in the left column, where $r=1$, count semisimple classes.
\end{exmp}

The next corollary, an immediate consequence of \eqref{eq:rchigl+} and
Proposition~\ref{prop:altglpm}, lists the generating functions for the equivariant \Euc s $\rchi_{r+1}(\mathrm{GL}^{\pm}(n,\F_q))$, $r \geq 0$,  for a  fixed $n$. (The first part is \cite[Proposition~4.19]{jmm:eulergl+}.)

\begin{cor}\label{prop:GGU}
  For any fixed $n \geq 1$,
  \begin{align*}
    &\sum_{r \geq 0} \rchi_{r+1}( \GL nq)x^r =
      \frac{1}{n!} \sum_{ \lambda \vdash n}  (-1)^{|\lambda|}  \frac{T(\lambda)}{1- U(\lambda,q)x} =
      \frac{(-1)^n}{|W_n|} \sum_{w \in W_n} \frac{\det(w)}{1-x\det(q-w)} \\
    &\sum_{r \geq 0} -\rchi_{r+1}( \GU nq)x^r =
      \frac{(-1)^n}{n!} \sum_{ \lambda \vdash n}  (-1)^{|\lambda|}  \frac{T(\lambda)}{1-(-1)^n U(\lambda,-q)x} =
    \frac{1}{|W_n|} \sum_{w \in W_n} \frac{\det(w)}{1-x\det(q+w)}
  \end{align*}
  \end{cor}
% /Users/jespermoller/projects/euler/magma/linposet/conclusion.prg rgenfct
%% see concl.gu.prg

For example, the power series $n!\sum\limits_{r \geq 0} -\rchi_{r+1}( \GU nq)x^r$ is
\begin{equation*}
  \frac{1}{1-(q+1)x}, \qquad
                 \frac{1}{1-(q+1)^2x} - \frac{1}{1-(q^2-1)x}, \qquad
  \frac{1}{1-(q+1)^3x}
  -\frac{3}{1-(q^2-1)(q+1)x}
  +\frac{2}{1-(q^3+1)x}
\end{equation*}
for $n=1,2,3$.

  \begin{comment}  
 \begin{exmp}
  % concl.gu.prg //Table of -\rchi_r(GL^-(n,q))/(q+1)^(r-1) as function of q
 For $r=1,2,3,4$ the first \pol s $-\rchi_r(\GU nq) \in \Z[q]$ are
\begin{align*}
&(-\rchi_2(\GU nq)/ (q+1))_{n \geq 1} = (1,1,1, \ldots )  \\
&(-\rchi_3(\GU nq)/ q^{n-1}(q+1)^2)_{n \geq 1} = (1,2,3,4, \ldots )    
\end{align*}
and the quotients $-\rchi_4(\GU nq)/ (q+1)^3$ for $1 \leq n \leq 9$ are
the irreducible \pol s 
    \begin{center}
      %% [D|(-1)^n*chigu(4,n)/(q+1)^3 : n in [1..10]]; concl.gu.prg
    \begin{tabular}[h]{>{$}c<{$}}
       1\\
    3q^2 + 1\\
    6q^4 + q^3 + 3q^2 + 1\\
    10q^6 + 3q^5 + 6q^4 + q^3 + 3q^2 + 1\\
    15q^8 + 6q^7 + 10q^6 + 3q^5 + 6q^4 + q^3 + 3q^2 + 1\\
    21q^{10} + 10q^9 + 15q^8 + 6q^7 + 10q^6 + 3q^5 + 6q^4 + q^3 + 3q^2 + 
        1\\
    28q^{12} + 15q^{11} + 21q^{10} + 10q^9 + 15q^8 + 6q^7 + 10q^6 + 3q^5 + 
        6q^4 + q^3 + 3q^2 + 1\\
    36q^{14} + 21q^{13} + 28q^{12} + 15q^{11} + 21q^{10} + 10q^9 + 15q^8 + 6q^7 + 
        10q^6 + 3q^5 + 6q^4 + q^3 + 3q^2 + 1\\
    45q^{16} + 28q^{15} + 36q^{14} + 21q^{13} + 28q^{12} + 15q^{11} + 21q^{10} + 10q^9
        + 15q^8 + 6q^7 + 10q^6 + 3q^5 + 6q^4 + q^3 + 3q^2 + 1\\
    % 55q^{18} + 36q^{17} + 45q^{16} + 28q^{15} + 36q^{14} + 21q^{13} + 28q^{12} + 
    %     15q^{11} + 21q^{10} + 10q^9 + 15q^8 + 6q^7 + 10q^6 + 3q^5 + 6q^4 + 
    %   q^3 + 3q^2 + 1 
    \end{tabular}
  \end{center}
   %% with concl.gu.prg
    L:=[D|Coefficient(FGU(4),n)/(q+1)^3 : n in [1..9]];
    [IsIrreducible(L[j]) : j in [1..9]];
\end{exmp}
  \end{comment}

\subsection{Hasse--Weil zeta functions and equivariant \Euc s}
\label{sec:conn-hasse-weil}

 The Hasse--Weil zeta function for a
projective variety $V$ defined over $\F_q$,
\begin{equation*}
  Z(V/\F_q;T) = \exp \Big( \sum_{n \geq 1}|V(\F_{q^n})| \frac{T^n}{n} \Big)
\end{equation*}
encodes the number of points on $V$ over $\F_{q^n}$ for all $n \geq 1$
\cite[V.2--V.3]{silverman2009}.

% for varieties over finite fields.  The Hasse--Weil zeta function of
% any supersingular elliptic curve $E$ defined over $\F_{q^2}$
% \begin{equation*}
%   Z(E/F_{q^2};T) = \frac{(1+qT)^2}{(1-T)(1-q^2T)}
% \end{equation*}
% informs about the number of points on $E$
% \begin{equation*}
%   \Style{DDisplayFunc=outset,DShorten=true}
%   |E(\F_{q^{2n}})|  = \frac{1}{(n-1)!}
%     \D[n]{\log Z(E/\F_{q^2};T)}{T}\big|_{T=0} = (q^n-(-1)^n)^2
% \end{equation*}
% % \begin{equation*}
% %   |E(\F_{q^{2n}})|  = \eval[2]{\frac{1}{(n-1)!}
% %     \od[n]{}{T} \log Z(E/\F_{q^2};T)}_{T=0} = (q^n-(-1)^n)^2
% % \end{equation*}
% over $\F_{q^{2n}}$ for any $n\geq 1$ \cite[V.2--V.3]{silverman2009}.

\begin{prop}\label{prop:halle}
  For any $m \geq 1$
  \begin{equation*}
    \FGL -{2m+1}(q,-T) = Z(E^m/\F_{q^2};T)^{-1} 
  \end{equation*}
  is the reciprocal of the Hasse--Weil zeta function of the $m$-fold self-product
  $E^m=E \times \cdots \times E$ of any supersingular elliptic curve $E$
  defined over $\F_{q^2}$.
\end{prop}
\begin{proof}
  Let $E$ be any supersingular elliptic curve defined over $\F_{q^2}$
  \cite[Definition, p.\ 145]{silverman2009}. We note that
  % The Frobenius on H^1(E) has two eigenvalues equal to -q so the trace
% is -2q, see hasseweilzeta.pdf
  \begin{equation}\label{eq:Z(E)}
     Z(E/\F_{q^2};T) = \frac{(1+qT)^2}{(1-T)(1-q^2T)} = \FGL
     -3(q,-T)^{-1} \stackrel{\text{Cor.~\ref{cor:expform}}}{=}
     \exp\Big( \sum_{n \geq 1}   (q^n-(-1)^n)^2 \frac{T^n}{n} \Big)
   \end{equation}
  and hence
  \begin{equation*}
     Z(E^m/\F_{q^2};T) = \exp\Big( \sum_{n \geq 1} (q^n-(-1)^n)^{2m}
     \frac{T^n}{n} \Big) \stackrel{\text{Cor.~\ref{cor:expform}}}{=}
     \FGL -{2m+1}(q,-T)^{-1}
  \end{equation*}
  as $|E(\F_{q^2})| = (q^n-(-1)^n)^2$ by \eqref{eq:Z(E)} and
  $E^m(\F_{q^{2n}}) = E(\F_{q^{2n}})^m$ for general reasons.
  \end{proof}
% \begin{proof}
%   Since $E^m(\F_{q^{2n}}) = E(\F_{q^{2n}})^m$. %% see mustata Exmp 2.11
% %% see mustata Exmp 2.11
%   the number of points on
%   $E^m$ over $\F_{q^{2n}}$ is $|E^m(\F_{q^{2n}})| =(q^n-(-1)^n)^{2m}$
%   so
%   \begin{equation*}
%      Z(E^m/\F_{q^2};T) = \exp\big( \sum_{n \geq 1} (q^n-(-1)^n)^{2m} \frac{T^n}{n} \big)
%   \end{equation*}
%   which, by Corollary~\ref{cor:expform}, is the inverse of $\FGL -{2m+1}(q,-T)$.
% \end{proof}

\begin{comment}
  \begin{rmk}(Hasse--Weil zeta functions)
  The function
  \begin{equation*}
    \FGL -{2r+1}(q,-T)^{-1} =
    \exp\Big( \sum_{n \geq 1} 
  (q^n-(-1)^n)^{2r} \frac{T^n}{n} \Big)
  \end{equation*}
  is the Hasse--Weil zeta function \cite[Definition, p
  140]{silverman2009}  $Z(E^r/\F_{q^2}:T)$ for the $r$-fold
  self-product $E^r = E \times \cdots \times E$ of any 
supersingular elliptic curve $E/\F_{q^2}$
  \cite[Definition, p 145]{silverman2009}. (I thank
  Lars Halvard Halle for this observation.)

    By \cite[Example 2.11]{mustata:zeta} we have $E^r(k) = E(k)^r$ so
    the claim is thet $|E(\F_{q^{2n}})| = (q^n-(-1)^n)^2$ for a supersingular elliptic curve $E/\F_{q^2}$.

  %%%%%%%%%%%%%%%%%%%%%%%%%%
  % Changing the variable $x$ to $-T$ and inverting the power series we
  % find that in odd degrees $2r+1$, $r \geq 1$,
  % \begin{equation*}
  %   \FGL -{2r+1}(q,x) = \frac{\prod_{0 \leq j \leq
  %       r}(1+q^{2j}x)^{\binom{2r}{2j}}}
  %   {\prod_{1 \leq j \leq r}(1-q^{2j-1}x)^{\binom{2r}{2j-1}}} \qquad
  %   \FGL -{2r+1}(q,-T)^{-1} =
  %   \frac{\prod_{1 \leq j \leq r}(1+q^{2j-1}T)^{\binom{2r}{2j-1}}}
  %   {\prod_{0 \leq j \leq r}(1-q^{2j}T)^{\binom{2r}{2j}}}
  % \end{equation*}
  % which is the Hasse--Weil zeta function \cite[Definition, p 140]{silverman2009}
  % \begin{equation*}
  %   \FGL -{2r+1}(q,-T)^{-1} = Z(E^r/\F_{q^2};T)
  % \end{equation*}
  % of the $r$th power of a supersingular elliptic curve $E/\F_{q^2}$
  % \cite[Definition, p 145]{silverman2009}. (This observation is due to
  % Lars Halvard Halle.)
  %%%%%%%%%%%%%%%%%
   %%%% computations  %%%%%%%%%%%%%%%%%%
  % > (Evaluate(FGU(3),-x)^-1)*(1-x)*(1-q^2*x);
  % 1 + 2*q*x + q^2*x^2 + O(x^20)
%   [Coefficient(Derivative(Log((Evaluate(FGU(3),-x)^-1)),n),0)/Factorial(n-1)
 %  : n in [1..5]];  
\end{rmk}
\end{comment}

\section{Transforms of \pol\ power series and \pol\ identities}
\label{sec:mult-t_a-transf}
%% version of this section is in guposet.07.tex

Let $F(q,x) = 1 + \sum_{n \geq 1} a(n)(q)x^n \in 1 +(x) \subseteq \Q[q][[x]]$ be a power series with leading term $1$ in the power series ring over the ring of rational \pol s in $q$.
 Given a sequence $S=(S(n)(q))_{n \geq 1}$ of rational numbers defined for each prime power $q$,  the $S$-transform of $F(q,x)$ is the power series \cite[Definition 3.1]{jmm:eulergl+}
 \begin{equation*}  %\label{defn:TSa}
      T_S(F(q,x)) = \prod_{d \geq 1} F(q^d,x^d)^{S(d)(q)}
    \end{equation*}
The transformation
   $T_S \colon 1+(x) \to 1+(x)$ is multiplicative in $F$ and exponential in $S$ the sense that
   \begin{equation*}
     T_S(1)=1, \qquad T_S(F_1(q,x) F_2(q,x)) = T_S(F_1(q,x) T_S(F_2(q,x)), \qquad T_{mS}(F(q,x)) = T_S(F(q,x))^m
   \end{equation*}
  for all $F_1(q,x), F_2(q,x) \in 1 + (x) \subseteq \Q[q][[x]]$ and rational numbers $m \in \Q$ \cite[\S3.2]{jmm:eulergl+}.

  For example, the $S$-transform of $1 \pm x^k$ is easily determined by evaluating the  
  coefficient of $x^{kn}$ in the infinite product expansion $T_S(1 \pm x^k) = \prod_{d \geq 1} (1 \pm x^{kd})^{S(d)(q)}$. (See
 the beginning of Subsection~\ref{sec:power-seri-expans-1} for multiset notation. We use the convention that the binomial coefficient $\binom m{-k} = (-1)^k \binom mk$ for all natural numbers $k$.)

%\href{http://math.stackexchange.com/questions/466823/polynomials-over-finite-field-with-irreducible-factors-of-odd-degrees?rq=1}{Stackexchange}

  \begin{lemma}\label{lemma:basic2}
    For any rational number $m$ and natural number $k$,
  the $mS$-transform of the power series $1 \pm x^k$ is
  \begin{equation*}
    T_{mS}(1 \pm x^k) = 1 + \sum_{n \geq 1} \Big[
    \sum_{\lambda \vdash n} \prod_{d \in B(\lambda)}
    \binom{m  S(d)(q)}{\pm E(\lambda,d)} \Big] x^{kn}
  \end{equation*}
\end{lemma}
% \begin{proof}
%   The coefficient of $x^{kn}$ in $T_{mS}(1 \pm x^k) = \prod_{d \geq 1} (1 \pm x^k)^{mS(d)(q)}$ is
% \begin{equation*}
%    \sum_{\lambda \vdash n}
%   \prod_{d \in B(\lambda)}
%   \binom{mS(d)(q)}{E(\lambda,d)} (\pm 1)^{\sum_{d\in B(\lambda)} E(\lambda,d)} =
%   \sum_{\lambda \vdash n}
%   \prod_{d \in B(\lambda)}
%   \binom{mS(d)(q)}{ \pm E(\lambda,d)}
% \end{equation*}
% for any $n \geq 1$. (The binomial coefficient $\binom m{-k} = (-1)^k\binom mk$ by convention.)
% \end{proof}  

The below corollary is Lemma~\ref{lemma:basic2} applied to the classical identity $T_{\IM {}q}(1-x) = \frac{1-qx}{1-x}$ \eqref{eq:Adq2}, while the theorem is the lemma applied to the identities
\begin{equation*}
  T_{a_{r+1}^{\pm}(q)}(1-x) = \FGL{\pm}{r+1}(q,x), \qquad
  a_{r+1}^{\pm}(q,n) = \frac{1}{n} \sum_{d \mid n} (\pm 1)^d \mu(n/d) (q^d - (\pm 1)^d)^r
\end{equation*}
found below Theorem~\ref{thm:mainprim} or below \cite[Theorem~1.7]{jmm:eulergl+}, and to the power series identities of Corollary~\ref{cor:A1q}.

\begin{cor}\label{cor:polidgl} %% polid.prg thmAgl thmBgl
  For any rational number $m$,
  \begin{equation*}
    1 + \sum_{n \geq 1} \Big[
    \sum_{\lambda  \vdash n} \prod_{d \in B(\lambda)}
    \binom{m\IM dq}{-E(\lambda,d)} \Big] x^n
    = \left( \frac{1-qx}{1-x} \right)^m
     \end{equation*}
\end{cor}
% \begin{proof}
%   Let $r=1$ in Theorem~\ref{thm:guglpolid} (or
%   apply Lemma~\ref{lemma:basic2}  to the
% classical identity \eqref{eq:Adq2} $ T_{\IM{}q}(1-x) =
%     \frac{1-qx}{1-x}$).
% % \begin{equation*}  %\label{eq:IM}
% %   T_{m\IM{}q}(1-x) =
% %   \left(
% %     \frac{1-qx}{1-x} \right)^m
% % \end{equation*}
% %(the case $r=1$ of \cite[(3.8)]{jmm:eulergl+}).
% \end{proof}

\begin{thm}\label{thm:guglpolid}  % thmpolidglgu in polid.prg
  For any rational number $m$ and natural number $r \geq 0$
  \begin{gather*}
    1 + \sum_{n \geq 1} \Big[
    \sum_{\lambda  \vdash n} \prod_{d \in B(\lambda)}
    \binom{ma_{r+1}^\pm(q,d)}{-E(\lambda,d)} \Big] x^n
    = \FGL{\pm}{r+1}(q,x)^m \\
    \Big(1+ \sum_{n^- \geq 1}\Big[\sum_{\lambda^- \vdash n^-} \prod_{d^- \in B(\lambda^-)}
    \binom{ m \SDIM -{d^-}q}{-E(\lambda^-,d^-)}\Big]x^{n^-}\Big)
    \Big(1 + \Big[\sum_{\lambda^+ \vdash n^+} \prod_{d^+ \in B(\lambda^+)}
    \binom{ m \SDIM +{d^+}q}{-E(\lambda^+,d^+)}\Big] x^{2n^+} \Big) =
    \Big(\frac{1-qx}{1+x}\Big)^m  \\
     \Big(1+ \sum_{n^- \geq 1}\Big[\sum_{\lambda^- \vdash n^-} \prod_{d^- \in B(\lambda^-)}
    \binom{ m \SDIM -{d^-}q}{-E(\lambda^-,d^-)}\Big]x^{n^-}\Big)
    \Big(1 + \Big[\sum_{\lambda^+ \vdash n^+} \prod_{d^+ \in B(\lambda^+)}
    \binom{ -m \SDIM -{d^+}q}{E(\lambda^+,d^+)}\Big] x^{n^+} \Big) =
    \Big(\frac{(1-qx)(1-x)}{(1+x)(1+qx)}\Big)^m
  \end{gather*}
\end{thm}

%  \begin{thm} %% thmgu1 thmgu2 thmgu3 thmgu4 in polid.prg  (see version in spposet.04.pdf for sign)
%   %%
%   \label{thm:polidgu}
%   For any rational number $m$,
%   \begin{gather*}
%     \Big(1+ \sum_{n^- \geq 1}\Big[\sum_{\lambda^- \vdash n^-} \prod_{d^- \in B(\lambda^-)}
%     \binom{ m \SDIM -{d^-}q}{-E(\lambda^-,d^-)}\Big]x^{n^-}\Big)
%     \Big(1 + \Big[\sum_{\lambda^+ \vdash n^+} \prod_{d^+ \in B(\lambda^+)}
%     \binom{ m \SDIM +{d^+}q}{-E(\lambda^+,d^+)}\Big] x^{2n^+} \Big) =
%     \Big(\frac{1-qx}{1+x}\Big)^m  \\
%      \Big(1+ \sum_{n^- \geq 1}\Big[\sum_{\lambda^- \vdash n^-} \prod_{d^- \in B(\lambda^-)}
%     \binom{ m \SDIM -{d^-}q}{-E(\lambda^-,d^-)}\Big]x^{n^-}\Big)
%     \Big(1 + \Big[\sum_{\lambda^+ \vdash n^+} \prod_{d^+ \in B(\lambda^+)}
%     \binom{ -m \SDIM -{d^+}q}{E(\lambda^+,d^+)}\Big] x^{n^+} \Big) =
%     \Big(\frac{(1-qx)(1-x)}{(1+x)(1+qx)}\Big)^m
%   \end{gather*}
% \end{thm}

Th\'evenaz' \pol\ identities for partitions \cite[Theorem A, Theorem B]{thevenaz92poly} are the cases $m=\pm 1$ of Corollary~\ref{cor:polidgl}. The purely combinatorial proof of a generalised version of Th\'evenaz' \pol\ identities
presented here
may qualify as an answer to question (1) on p.\ 129 of \cite{thevenaz92poly}.  Corollary~\ref{cor:polidgl} is the special case $r=1$ of the first equation of Theorem~\ref{thm:guglpolid} as $a_2^+(q,d) = \IM dq$.

These \pol\ identities are examples of
Corollary~\ref{cor:polidgl} and Theorem~\ref{thm:guglpolid}  at $n=3$
% polid.prg  see thm:polidgl
% thm:polidgu in polid.prg 
 \begin{gather*}
  \binom{m\IM 3q}{-1} + \binom{m\IM 2q}{-1}\binom{m\IM 1q}{-1} + \binom{m\IM 1q}{-3} =
  \begin{cases}
    (q-1)q^2 & m=-1 \\
    \frac{1}{16}(q-1)(5q^2+2q+1) & m=-\frac{1}{2} \\
    \frac{1}{16}(1-q)(q^2+2q+5) & m=\frac{1}{2} \\
    1-q & m=1
  \end{cases} \\
   \binom{m \SDIM -3q}{-1} + \binom{m \SDIM -1q}{-3}
    + \binom{m \SDIM -1q}{-1} \binom{m \SDIM +1q}{-1} =
    \begin{cases}
      -q^3-q^2 & m = -1 \\
      \frac{1}{16}(q+1)(5q^2-2q+1) & m=-\frac{1}{2} \\
      -\frac{1}{16}(q+1)(q^2-2q+5) & m=\frac{1}{2} \\
       1+q & m = +1 \\ 
    \end{cases} \\
    \binom{m\SDIM -3q}{-1} + \binom{m\SDIM -1q}{-3} +
    \binom{m\SDIM -1q}{-2} \binom{-m\SDIM -1q}{1} + \binom{m\SDIM -1q}{-1}\binom{-m\SDIM -1q}{2} \\ +
    \binom{-m\SDIM -3q}{1} + \binom{-m\SDIM -1q}{3} =
    \begin{cases}
       \frac{1}{2}(q^3 + q^2 + q + 1) & m = -\frac{1}{2} \\
      2(q+1)(q^2+q+1) & m=-1 \\ 4(q+1)(3q^2+5q+3) & m=-2
    \end{cases}
  \end{gather*}
  The terms on the left side correspond to the three partitions $\{3^1\}, \{2^11^1\}, \{1^3\}$, of $3$ in the first,
 to $(n^-,n^+)$ in $\{(3,0),(1,1)\}$ in the second,  and to $(n^-,n^+)$ in $\{(3,0), (2,1), (1,2), (0,3)\}$ in the third example.

%  Recurrence \eqref{eq:FGL-recur} determines more complicated \pol\ identities when $r>1$.

\section{Primary equivariant reduced \Euc s}
\label{sec:p-prim-equiv}

Let $p$ be a prime and, as in the previous sections, $q$ a prime
power. (The prime $p$ may or may not divide the prime power $q$.)  In
this section we discuss the $p$-primary equivariant reduced \Euc s of
the $\gu$-poset $\Li_n^-(\F_q)^*$.

\begin{defn}\cite[(1-5)]{tamanoi2001}\label{defn:pprimeuc}
  The $r$th $p$-primary equivariant reduced \Euc\ of the $\gu$-poset
  $\Li_n^-(\F_q)^*$ is the normalised sum
\begin{equation*}
  \rchi_r(p,\GU nq) = \frac{1}{|\gu|}
  \sum_{X \in \Hom{\Z \times
      \Z_p^{r-1}}{\gu}}\rchi(C_{\Li_n^-(\F_q)^*}(X(\Z \times \Z_p^{r-1})))
\end{equation*}
of reduced \Euc s of fixed sub-posets.
\end{defn}

In this definition, $\Z_p$ denotes the ring of $p$-adic
integers and the sum ranges over all
homo\m s of $\Z \times \Z_p^{r-1}$ into $\gu$ or, equivalently, over all
commuting $r$-tuples
$(X_1,X_2,\ldots,X_r)$ of elements of $\GU nq$ where
$X_2,\ldots,X_r$ have $p$-power order. The {\em first\/} $p$-primary
equivariant reduced \Euc\ is independent of $p$ and agrees with the
first equivariant reduced \Euc . If $p$ divides $q$, then
$\rchi_r(p,\GU nq)=0$ for all $r,n>1$ by
Lemma~\ref{lemma:contractability}.

The $r$th $p$-primary equivariant {\em unreduced\/} \Euc\
$\chi_r(p,\GU nq)$, obtained by replacing the reduced \Euc s with
\Euc s  in Definition~\ref{defn:pprimeuc}, agrees with the
\Euc\
computed in
Morava $K(r)$-theory at $p$
of the homotopy orbit space $\B\!\Li^-_n(\F_q)^*_{h\!\gu}$ for the action of
$\gu$ on the classifying space for the poset $\Li^-_n(\F_q)^*$
\cite{HKR2000}
\cite[2-3, 5-1]{tamanoi2001}
\cite[Remark 7.2]{jmm:partposet2017}.

The $r$th $p$-primary  generating function at $q$ is the integral power series
\begin{equation}\label{eq:primgenfct}
   \FGL{-}r(p,q,x) = 1- \sum_{ n \geq 1} \rchi_{r}(p,\GU nq) x^n \in \Z[[x]]
\end{equation}
associated to the sequence $(-\rchi_r(p,\GU nq))_{n \geq 1}$ of the  {\em 
  negative\/} of the $p$-primary equivariant reduced \Euc s. For $r=1$,
$\FGL{-}1(p,q,x)=\FGL -1(q,x) = 1+x$, and when $p \mid q$,
$\FGL{-}r(p,q,x)=1+x$ for all $r \geq 1$. The interesting case is when
the characteristic of $\F_q$ is different from $p$.

\begin{defn}\label{defn:Adpq2}
  For every integer $d \geq 1$,
  \begin{itemize}
  \item $\IM d{p,q}$ is the number of $p$-power order  Irreducible Monic \pol s of degree
    $d$ over $\F_{q}$ with nonzero constant term
 % \item $A(d)(q^2)$ is the number of irreducible monic \pol s of
  %  degree $d$ over $\F_{q^2}$ with
 % nonzero constant term
  \item $\SDIM -d{p,q}$ is the number of $p$-power order Self-Dual
    Irreducible Monic \pol s of degree $d$ over $\F_{q^2}$ with
    nonzero constant term
  \item $\SDIM +d{p,q} =\frac{1}{2}(\IM d{p,q^2}- \SDIM -d{p,q})$ is the
    number of unordered pairs of $p$-power order non-self-dual
    irreducible monic \pol s of degree $d$ over $\F_{q^2}$ with
    nonzero constant term
  \end{itemize}
\end{defn}

The next lemma follows from Lemma~\ref{lemma:meyn} combined with the
fact from \cite[Lemma~3.6]{lidlnieder97} that $x^a-1$ divides $x^b-1$
in $\F_{q^2}[x]$ if and only if $a$ divides $b$.

\begin{lemma}\label{lemma:SDIMmmu}
  Assume $p \nmid q$ and let $m \geq 1$ be an odd integer.
  \begin{enumerate}
  \item The
  $p$-power order
  self-dual irreducible monic \pol s of degree dividing $m$ are precisely
  the irreducible factors of  $x^{(q^m+1)_p}-1 \in \F_{q^2}[x]$. \label{lemma:SDIMmmua}
\item $\sum\limits_{d \mid m}d \SDIM -d{p,q} = (q^m+1)_p$ and $m \SDIM -m{p,q} = \sum\limits_{d \mid m} \mu(m/d)(q^d+1)_p$. \label{lemma:SDIMmmub}
  \end{enumerate}
\end{lemma}

\begin{comment}
For $d=1$,
\begin{equation*}
  \IM 1{p,q^2} = (q^2-1)_p, \qquad\SDIM -1{p,q} =(\SDIM -1q)_p =
  (q+1)_p, \qquad \SDIM +1{p,q} = \frac{1}{2}((q^2-1)_p-(q+1)_p)
\end{equation*}
  In general, the $p$-power order self-dual irreducible monic \pol s in
$\F_{q^2}[x]$ are the irreducible factors of the cyclotomic \pol s
$\Phi_{p^j}$ where $q^m \equiv -1 \bmod p^j$ for some odd $m \geq 1$
and $j \geq 1$. The cyclotomic \pol\ $\Phi_{p^j}$ of degree
$\varphi(p^j) = p^j - p^{j-1}$ splits into distinct irreducible
factors all of degree $\ord{p^j}{q^2}$
\cite[Theorem~2.47]{lidlnieder97}. Thus
\begin{equation}
  \label{eq:SDIMdpq}
  \SDIM -d{p,q} = \frac{1}{d} \sum \varphi(p^j)
\end{equation}
where the summation is over all $j \geq 0$ such that
$\ord{p^j}{q^2}=d$ and
$p^j \mid q^m+1$ for some odd $m \geq 1$.
%% $q^m \equiv -1 \bmod p^j$ for some odd $m \geq 1$.
Since $q^m \equiv -1 \bmod p^j$, $q^{2m} \equiv 1 \bmod p^j$ and $2m$
is a multiplum of $d$ so that $m$ is a multiplum of $d$.
% \mynote{Since $q^{2d} \equiv 1 \bmod p^j$ is then $q^{d} \equiv -1 \bmod p^j$?}
\end{comment}

The $p$-primary version of Lemma~\ref{lemma:recur} states that for $p \nmid q$, $r \geq 1$, and $n>1$,
\begin{equation*}
    \rchi_{r+1}(p,\GU nq) = \sum_{[g] \in [\gu_p] }
    \rchi_{r}(p,C_{\Li_n^-(\F_q)^*}(g), C_{\gu}(g))
  \end{equation*}
  where the sum ranges over  the set
  $[\gu_p]$ of conjugacy classes of $p$-elements. The point here is that a semisimple element of $\gu$ has $p$-power order if and only all irreducible factors of its characteristic \pol\ have $p$-power order \cite[Lemma~4.4]{jmm:eulergl+}. In terms of generating functions we get the $p$-primary version
\begin{equation}
  \label{eq:FGL-recurprim}
  \FGL -{r+1}(p,q,x)  = T_{\SDIM -{}{p,q}}(\FGL -r(p,q,x)) T_{\SDIM
    +{}{p,q}}(\FGL +r(p,q^2,x^2)) 
\end{equation}
% \begin{multline}
%   \label{eq:FGL-recurprim}
%   \FGL -{r+1}(p,q,x)  = T_{\SDIM -{}{p,q}}(\FGL -r(p,q,x)) T_{\SDIM
%     +{}{p,q}}(\FGL +r(p,q^2,x^2)) \\
%   = \prod_{d \geq 1} \FGL -r(p,q^d,x^d)^{\SDIM -d{p,q}}
%   \prod_{d \geq 1} \FGL +r(p,q^{2d},x^{2d})^{\SDIM +d{p,q}}
% \end{multline}
of \eqref{eq:FGL-recur}. In the following we prefer to work with the equivalent relation
\begin{equation}
  \label{eq:prodexpprim}
   a_{r+1}^-(p,q,N) =
\sum_{d \mid N} a_r^-(p,q^d,N/d)\SDIM -d{p,q} +
  \sum_{2d \mid N} a_r^+(p,q^{2d},N/2d)\SDIM +d{p,q}
 % \sum_{N=nd} a_r^-(p,q^d,n)\SDIM -d{p,q} +
 % \sum_{N=2nd} a_r^+(p,q^{2d},n)\SDIM +d{p,q}
\end{equation}
where
\begin{equation*}
  a_{r}^-(p,q,n) =
  \frac{1}{n} \sum_{d \mid n} (-1)^d\mu(n/d)(q^d -(-1)^d)^{r-1}_p
  \qquad
  a_{r}^+(p,q,n) =
  \frac{1}{n} \sum_{d \mid n} \mu(n/d)(q^d -1)^{r-1}_p
\end{equation*}
To go from \eqref{eq:FGL-recurprim} to \eqref{eq:prodexpprim} we use 
the infinite product expansions 
\begin{align*}
  &\FGL -{r+1}(p,q,x) =
                                                                     \prod_{N \geq 1} (1-x^N)^{a_{r+1}^-(p,q,N)}  \\
  &T_{\SDIM -{}{p,q}}(\FGL -r(p,q,x)) =
                        \prod_{d \geq 1} \FGL -r(p,q^d,x^d)^{\SDIM -d{p,q}}       =         
                        \prod_{n,d \geq 1} (1-x^{dn})^{a_{r}^-(p,q^d,n)
                        \SDIM -d{p,q}}  \\
  &T_{\SDIM +{}{p,q}} \FGL +r(p,q^2,x^2) =
                        \prod_{d \geq 1} \FGL +r(p,q^{2d},x^{2d})^{\SDIM +d{p,q}}       =                           
                        \prod_{n,d \geq 1} (1-x^{2dn})^{a_{r}^+(p,q^{2d},n)
                        \SDIM +d{p,q}}
\end{align*}
%%%%%%%%%%%%%%
% \begin{align*}
%   &\sum_{d \mid N} a_r^-(p,q^d,N/d) \SDIM -d{p,q} =
%   \sum_{d \mid N} \frac{1}{N} \sum_{e \mid (N/d)} (-1)^e \mu(N/de) (q^{de}-(-1)^e)_p^{r-1} d \SDIM -d{p,q}  \\
%   &\sum_{2d \mid N} a_r^+(p,q^{2d},N/2d)\SDIM +d{p,q} =
%     \sum_{2d \mid N} \frac{2}{N} \sum_{e \mid (N/2d)} \mu(N/2de) (q^{2de}-1)^{r-1}_p d \SDIM +d{p,q}
%   \end{align*}
%%%%%%%%%%%%%%%%%%%%%%%%%
of the three factors in \eqref{eq:FGL-recurprim} obtained by applying \cite[Lemma~3.7]{jmm:eulergl+} to
the expressions of Theorem~\ref{thm:mainprim} and \cite[Theorem~1.7]{jmm:eulergl+}.

\begin{proof}[Proof of Theorem~\ref{thm:mainprim}]
We must show that the functions $a_r^\pm(p,q,n)$
satisfy
recurrence relation~\eqref{eq:prodexpprim}.
The right side of \eqref{eq:prodexpprim} multiplied by $N$ is
\begin{multline*}
  %% \sum_{d \mid N} (-1)^d \mu(N/d) (q^d-(-1)^d)^r_p \\=
  \sum_{d \mid N}  d \SDIM -d{p,q} \sum_{e \mid (N/d)} (-1)^e \mu(N/de) (q^{de}-(-1)^e)_p^{r-1}  +
  \sum_{2d \mid N}   2d\SDIM +d{p,q} \sum_{e \mid (N/2d)} \mu(N/2de) (q^{2de}-1)^{r-1}_p \\ =
  \sum_{d \mid N}  d \SDIM -d{p,q} \sum_{e \mid (N/d)} (-1)^e \mu(N/de) (q^{de}-(-1)^e)_p^{r-1}  -
  \sum_{2d \mid N}  d \SDIM -d{p,q}  \sum_{e \mid (N/2d)}  \mu(N/2de) (q^{2de}-1)_p^{r-1}  \\ +
  \sum_{2d \mid N} d \IM d{p,q^2} \sum_{e \mid (N/2d)}  \mu(N/2de) (q^{2de}-1)_p^{r-1} 
\end{multline*}
When $N$ is odd, we are left with
\begin{multline*}
  -\sum_{d \mid N}  d \SDIM -d{p,q} \sum_{e \mid (N/d)}  \mu(N/de) (q^{de}+1)_p^{r-1}  =
  -\sum_{f \mid d_1 \mid d_2 \mid N} \mu(d_1/f)\mu(N/d_2)(q^f+1)_p(q^{d_2}+1)^{r-1}_p \\=
  -\sum_{f \mid d_2 \mid N} \mu(N/d_2) (q^f+1)_p(q^{d_2}+1)^{r-1}_p \sum_{\{d_1 \colon f \mid d_1 \mid d_2\}} \mu(d_1/f) =
  -\sum_{d \mid N} \mu(N/d) (q^d+1)^r_p = Na_{r+1}^-(p,q,N)
\end{multline*}
where we first use Lemma~\ref{lemma:SDIMmmu}.\eqref{lemma:SDIMmmub} and next observe that
the sum
\begin{equation*}
  \sum_{\{d_1 \colon f \mid d_1 \mid d_2\}} \mu(d_1/f) =
  \begin{cases}
    1 & f=d_2 \\ 0 & f < d_2 
  \end{cases}
\end{equation*}
contributes only when $f=d_2$. Thus \eqref{eq:prodexpprim} holds under the assumption that $N$ is odd.

When $N=2N_1$ is even we have
\begin{multline*}
  \sum_{2d \mid N} d \IM d{p,q^2} \sum_{e \mid (N/2d)}  \mu(N/2de) (q^{2de}-1)_p^{r-1} =
  \sum_{d \mid N_1} d \IM d{p,q^2} \sum_{de \mid N_1} \mu(N_1/de)(q^{2de}-1)_p^{r-1} \\=
  \sum_{d_1 \mid d_2 \mid N_1} d_1 \IM {d_1}{p,q^2} \mu(N_1/d_2)(q^{2d_2}-1)_p^{r-1} =
  \sum_{f \mid d_2 \mid N} \mu(N/d_2) (q^{2f}-1)_p(q^{2d_2}-1)_p^{r-1} \sum_{\{d_1 \colon f \mid d_1 \mid d_2\}} \mu(d_1/f) \\ =
  \sum_{d_1 \mid N_1} \mu(N_1/d_1)(q^{2d_1}-1)_p^r =
  \sum_{2d\mid N} \mu(N/2d)(q^{2d}-1)_p^r 
\end{multline*}
which is the part of $Na_{r+1}^-(p,q,N)$ defined by the {\em even\/} divisors of $N$.
Remember that $\SDIM -dq$, and then also $\SDIM -d{p,q}$, is nonzero only for odd $d$ (Proposition~\ref{prop:oddm}).
Thus the claim for even $N=2N_1$ is 
\begin{multline*}
  -\sum_{\substack{d \mid N \\ \text{$d$ odd}}} \mu(N/d) (q^d+1)^r_p =\\ 
  \sum_{\substack{d_1 \mid d_2 \mid N\\ \text{$d_1$ odd}}} (-1)^{d_2/d_1} d_1 \SDIM -{d_1}{p,q} \mu(N/d_2) (q^{d_2}-(-1)^{d_2/d_1})^{r-1}_p  
  -\sum_{\substack{d_1 \mid d_2 \mid N_1 \\ \text{$d_1$ odd}}} d_1 \SDIM -{d_1}{p,q} \mu(N_1/d_2) (q^{2d_2}-1)^{r-1}_p
\end{multline*}
Note that for every $j=1,\ldots,k$, where $N_2 = 2^k$ is the highest power of $2$ dividing $N$, the part of the first sum with $2^j \parallel d_2$ is annihilated by the part of the second sum with $2^{j-1} \parallel d_2$. Thus the right hand side reduces to the
part of the first sum where $d_2$ is odd. By the computation just done for odd $N$, that sum equals the left hand side.
Thus \eqref{eq:prodexpprim} holds also for even $N$.
\end{proof}

When $p$ does not divide $q$, the sequences
$(\SDIM{\pm}d{p,q})_{d \geq 1}$ and hence the generating functions
$\FGL{-}{r+1}(p,q,x)$, $r \geq 1$, depend only on the closure
$\overline{\gen q}$ of the cyclic subgroup generated by $q$ in the
topological group $\Z_p^\times$ of $p$-adic units
\cite[Lemma~4.9]{jmm:eulergl+}.  For instance, the $2$-primary power series
$\FGL -{r+1}(2,q,x)$ are identical for $q=3,11,19,27,\ldots$,
with $\log \FGL -{r+1}(2,3,x) =  \sum_{n \geq 1} (-1)^{n+1}(4n)_2^r x^n/n$,
and the $3$-primary
power series $\FGL -{r+1}(3,q,x)$ are identical for
$q=2,5,11,23,\ldots$ with $\log \FGL -{r+1}(3,2,x) =  \sum_{n \geq 1} (-1)^{n+1}(3n)_3^r x^n/n$ \cite[Figure 3, Example~4.16]{jmm:eulergl+}.
% /Users/jespermoller/projects/euler/magma/suposet/SDIMp.prg    samepclass:=function(p,q);

\begin{comment}
Figures~\ref{fig:3} and \ref{fig:4} contain some concrete numerical
values of second and third primary equivariant reduced \Euc s.
\end{comment}

\begin{comment}
  \begin{rmk}\label{rmk:pprimpolid}
  For all rational numbers $m$,  and with $ a_{r+1}^{\pm}(p,q,n) = \frac{1}{n} \sum_{d \mid n} (\pm 1)^d \mu(n/d) (q^d - (-1)^d)_p^r$ for all $r \geq 0$, the power series identity
\begin{equation*}
   1 + \sum_{n \geq 1} \Big[
    \sum_{\lambda  \vdash n} \prod_{d \in B(\lambda)}
    \binom{ma_{r+1}^\pm(p,q,d)}{-E(\lambda,d)} \Big] x^n
    = \FGL{\pm}{r+1}(p,q,x)^m
  \end{equation*}
  follows 
  from Lemma~\ref{lemma:basic2} applied to the transform $T_{a_{r+1}^\pm(p,q)}(1-x) =\FGL{\pm}{r+1}(p,q,x)$.
\end{rmk}
\end{comment}

\subsection{Alternative presentations of $p$-primary equivariant reduced \Euc s}
\label{sec:power-seri-expans}

It is immediate from \cite[Theorem~1.7]{jmm:eulergl+} and Theorem~\ref{thm:mainprim} that there is
\lq Ennola duality\rq\
  \begin{equation}
  \label{eq:FGLpm}
  \FGL -{r}(p,q,x) =
  \FGL +{r}(p,-q,(-1)^{r}x), \qquad r \geq 1
\end{equation}
between the $p$-primary generating functions for $\operatorname{GL}^{\pm}_n(\F_q)$.

We can now proceed exactly as in Subsection~\ref{sec:power-seri-expans-1} to prove
the next two propositions. In Proposition~\ref{prop:altglpminvpprim},
$\rchi_{r+1}(p,\operatorname{GL}_n^\pm(\F_q))^{-1}$ denotes the coefficient of $x^n$ in the reciprocal power series $\FGL{\pm}{r+1}(p,q,x)^{-1}$.

\begin{prop}\label{prop:altglpmpprim}  % /Users/jespermoller/projects/euler/magma/suposet/alternative.prg
  The $p$-primary equivariant \Euc s of the $\operatorname{GL}_n^\pm(\F_q)$-posets
  $\Li_n^\pm(\F_q)^*$, $n \geq 1$, are
  \begin{align*}
  &\rchi_{r+1}(p, \GL nq)
    = \frac{1}{n!} \sum_{ \lambda \vdash n} (-1)^{|\lambda|} T(\lambda) U(\lambda,q)^r_p 
    =\frac{(-1)^n}{|W_n|} \sum_{w \in W_n} \det(w) |T_n(\overline{\F}_s)_w^{F_q}|^r_p \\
    & \phantom{\rchi_{r+1}(p, \GL nq)} = \frac{(-1)^n}{|W_n|} \sum_{w \in W_n} \det(w) \det(q-w)^r_p \\
    -&\rchi_{r+1}(p,\GU nq) = (-1)^{n(r+1)}
     \frac{1}{n!} \sum_{ \lambda \vdash n} (-1)^{|\lambda|} T(\lambda) U(\lambda,-q)^r_p  =
       \frac{(-1)^{\binom n2}}{|W_n|} \sum_{w \in W_n} \det(w) |T_n(\overline{\F}_s)_w^{F_q\sigma}|^r_p \\
      & \phantom{\rchi_{r+1}(p,\GU nq)} = 
      \frac{1}{|W_n|} \sum_{w \in W_n} \det(w) \det(q + w)^r_p 
    \end{align*}
  \end{prop}

\begin{prop}\label{prop:altglpminvpprim}  % /Users/jespermoller/projects/euler/magma/suposet/alternative.prg
  The reciprocal $p$-primary equivariant \Euc s
  of the $\operatorname{GL}_n^\pm(\F_q)$-posets $\Li_n^\pm(\F_q)^*$, $n \geq 1$,
  are
  \begin{align*}
    &\rchi_{r+1}(p, \GL nq)^{-1} =\frac{1}{|W_n|} \sum_{w \in W_n} |T_n(\overline{\F}_s)_w^{F_q}|^r_p
      = \frac{1}{|W_n|} \sum_{w \in W_n}  \det(q-w)^r_p \\
    (-1)^n&\rchi_{r+1}(p, \GU nq)^{-1} =
   \frac{1}{|W_n|} \sum_{w \in W_n}  |T_n(\overline{\F}_s)_w^{F_q\sigma}|^r_p =
      \frac{1}{|W_n|} \sum_{w \in W_n} \det(q + w)^r_p 
  \end{align*}
\end{prop}

A slight
modification of \cite[Proposition~3.7.4]{carter:finiteLie} shows that
$(\pm 1)^n\rchi_2(p,\operatorname{GL}^\pm_n(\F_q))^{-1}$ 
equals the number of semisimple $p$-classes in $\operatorname{GL}^\pm_n(\F_q)$. 
%% /Users/jespermoller/papers/jeanmichel/jeanmichel.pdf

The next corollary, the $p$-primary version of Corollary~\ref{prop:GGU},
is an immediate consequence of Proposition~\ref{prop:altglpmpprim} and it specifies the generating functions for the $p$-primary equivariant \Euc s expanded after the parameter $r$ and with fixed $n$.

\begin{cor}\label{prop:GGUpprim}
  For any fixed $n \geq 1$,
  \begin{align*}
    &\sum_{r \geq 0} \rchi_{r+1}(p, \GL nq)x^r =
      \frac{1}{n!} \sum_{ \lambda \vdash n}  \frac{ (-1)^{|\lambda|} T(\lambda)}{1- U(\lambda,q)_px} =
      \frac{(-1)^n}{|W_n|} \sum_{w \in W_n} \frac{\det(w)}{1-x\det(q-w)_p} \\
    &\sum_{r \geq 0} -\rchi_{r+1}(p, \GU nq)x^r =
      \frac{(-1)^n}{n!} \sum_{ \lambda \vdash n}  \frac{ (-1)^{|\lambda|} T(\lambda)}{1-(-1)^n U(\lambda,-q)_px} =
    \frac{1}{|W_n|} \sum_{w \in W_n} \frac{\det(w)}{1-x\det(q+w)_p}
  \end{align*}
  \end{cor}
% /Users/jespermoller/projects/euler/magma/linposet/conclusion.prg rgenfct
%% see concl.gu.prg
% /Users/jespermoller/projects/euler/magma/linposet/conclusion.prg rgenfct

  For example, when $n=3$, $p=2$, and $q=3,11,19,27, \ldots$ is any prime power with $q \equiv 3 \bmod 8$, $(q^2-1)_2=(3^2-1)_2$, 
  the generating function (times $3!$)  for the $2$-primary equivariant reduced \Euc s of $\GU 3q$ is
  %% alternative.prg see //check of final claim
  \begin{equation*}
    3!\sum_{r \geq 0} -\rchi_{r+1}( 2,\GU 3q)x^r =
    \frac{1}{1-x(q+1)_2^3} - \frac{3}{1-x(q^2-1)_2(q+1)_2} + \frac{2}{1-x(q^3+1)_2} =
    \frac{1}{1-64x} + \frac{2}{1-4x} - \frac{3}{1-32x}
  \end{equation*}
 with the three terms corresponding to the three partitions $\{1^3\},\{1^12^1\},\{3^1\}$ of $3$.
%\noindent\textcolor{blue}{\rule{\textwidth}{2pt}}

\section*{Acknowledgments}
\label{Acknowledgments}
I with to thank Lars Halvard Halle for pointing out the
connection 
to Hasse--Weil zeta-functions, 
and Jean Michel for a helpful remark, used in Subsection~\ref{sec:power-seri-expans-1}, that, to him but not to me, was completely obvious.

\bibliographystyle{amsplain}
%\bibliography{/Users/jespermoller/projects/top}
\bibliography{top}
\end{document}